\numberwithin{equation}{section}
\newtheorem{myDefn}{Definition}[section]
\newtheorem{myProp}[myDefn]{Proposition}
\newtheorem{myExa}[myDefn]{Example}
\newtheorem{myTheorem}[myDefn]{Theorem}
\newtheorem{myRem}[myDefn]{Remark}
\def\HH{\mathrm{H}}
\def\LL{\mathrm{L}}
\def\R{\mathbb{R}}
\def\N{\mathbb{N}}
\def\nn{\mathrm{n}}
\newcommand{\fonction}[5]{\begin{array}[t]{lrcl}#1 :&#2 &\longrightarrow &#3\\&#4& \longmapsto &#5 \end{array}}
\newlist{primenumerate}{enumerate}{1}
\setlist[primenumerate,1]{label={\roman*$'$}}
\title{Analysis of a toy model for optimal crop protection}
\author{Luis Almeida\footnote{Sorbonne Université, Université Paris Cité, CNRS, Laboratoire Jacques-Louis Lions, LJLL, F-75005 Paris, France \texttt{luis.almeida@sorbonne-universite.fr}}, Aymeric Jacob de Cordemoy\footnote{Sorbonne Université, Université Paris Cité, CNRS, Laboratoire Jacques-Louis Lions, LJLL, F-75005 Paris, France \texttt{aymeric.jacob\_de\_cordemoy@sorbonne-universite.fr}}, Ayman Moussa\footnote{Sorbonne Université, Université Paris Cité, CNRS, Laboratoire Jacques-Louis Lions, LJLL, ENS-PSL, Département de Mathématiques et Applications (DMA), F-75005 Paris, France. \texttt{ayman.moussa@sorbonne-universite.fr}}, Nicolas Vauchelet\footnote{LAGA, CNRS UMR 7539, Institut Galilée, Université Sorbonne Paris Nord, 99 avenue Jean-Baptiste Clément, 93430 Villetaneuse, France. \texttt{vauchelet@math.univ-paris13.fr}}
}
\begin{document}

\maketitle

\begin{abstract}
In this paper we investigate an optimal control problem involving a toy model for the protection on a crop
field. Precisely, we consider a protection on a crop field and we want to place intervention zones represented by a control, in order to maximize the protection on the field during a given period. We prove that there exists a unique control which maximizes the protection and, moreover, it must be a bang-bang control. Furthermore, with additional assumptions on the crop field geometry, some results on the shape of the optimal intervention are proved using comparison results for elliptic equations via Schwarz and Steiner symmetrizations. Finally, some numerical simulations are performed in order to illustrate those results.
\end{abstract}

\textbf{Keywords:} Optimal control, bang-bang controls, Schwarz symmetrization, Steiner symmetrization, elliptic equations, population dynamics.

\textbf{AMS Classification:} 35Q92; 49J99; 49J30; 49K20; 49Q10.

\section{Introduction}

\paragraph{Motivation}

\textit{Population dynamics} is a branch of mathematical biology that studies the fluctuation over time of the number of individuals in a population of living beings. The encounter of optimal control techniques with population dynamics led to remarkable examples of applications in applied mathematics. For instance: optimal intervention strategies have been studied to control epidemics (see, e.g.,~\cite{BEHNCKE,GREENHALGH} for more general settings and~\cite{OptimCOVID} for an application to COVID19); in~\cite{OptimInsecticide} the use of insecticide is optimized in the fight against arboviruses; in~\cite{WOLB2,OptimWol,WOLB} the authors design release protocols in a population replacement strategy.
We refer also to~\cite{LOU2} where the author studies how migration and spatial heterogeneity of the environment affect the total population of a single and multiple species, to~\cite{YANA} that deals with an indefinite weight linear eigenvalue problem related with biological invasions of species, to~\cite{MAZPRINAD} where the authors study the problem of optimizing the total population size for a standard logistic-diffusive model is studied, to~\cite{MAZ} that investigates the optimization of the carrying capacity in logistic diffusive models, to~\cite{LOU} which focuses on how to maximize the total population of a single species with logistic
growth living in a patchy environment and, finally, to~\cite{EIJI} that studies the effect of spatial heterogeneity on the total population of
a biological species at a steady state, using a reaction–diffusion logistic model.

In this paper, we are interested in controlling crop diseases such as Cercospora leaf spot, beet yellows virus, powdery mildew, etc., which are one of the main causes of yield loss in vegetable crops. Phytosanitary products are classically used to fight their spread, but due to their negative impact on the environment it is important to reduce their use and to find alternative strategies. For instance, in~\cite{GIR} the authors develop an agro-ecological approach to control aphid populations in sugar beet fields using natural predators in order to prevent the spread of viruses transmitted by these pests. Inspired by this problematic and the references cited above, we consider in this paper an optimal control problem involving a toy model for the protection on a crop field.

\medskip

\paragraph{Description of the optimal control problem}
Let~$n\in\N^*$ be a positive integer and $\Omega$ be a nonempty bounded connected open subset of $\R^{n}$, with a lipschitz boundary $\partial{\Omega}$. Let $T>0$ be the final time and consider the following toy model for the protection on a crop field given by
\begin{equation}\label{modele}
\arraycolsep=2pt
\left\{
\begin{array}{rcll}
\partial_t \phi  & = &  -\alpha(1-u)\phi & \text{ in } \Omega\times]0,T[ , \\
\phi(\cdot,0) & = & 1 & \text{ in } \Omega,\\
  p_u - D\Delta p_u & = &  \phi(u) & \text{ in } \Omega\times]0,T[ , \\
p_u & = & 1  & \text{ on } \partial{\Omega}\times]0,T[,
\end{array}
\right.
\end{equation}
for some~$u\in\LL^{\infty}(\Omega)$  such that $0\leq u \leq 1$ \textit{a.e.} in $\Omega$  and~$D>0$ is the diffusion coefficient. This model has the following interpretation. The domain~$\Omega$ represents a crop field, the function~$\phi(u)$ represents the active amount of a protection inducing mechanism (for instance, the presence of some predator species) and~$p_u(x,t)$ the protection due to $\phi(u)$ at time~$t>0$ and at a position~$x\in \Omega$ in the field. The dynamics of the protection is assumed to be much faster than that of the disease, thus the protection in the field adapts to the control immediately and reaches an equilibrium. This protection is assumed to be perfect at initial time, i.e.~$p_u(x,0)=1$ \textit{a.e.} in $\Omega$, and degrades over time with a rate denoted~$\alpha>0$. In particular, note that the solution $\phi$ is given by $\phi(u):  \Omega\times]0,T[ \rightarrow \phi(u)(x,t)=\mathrm{e}^{-\alpha(1-u(x))t}\in\R^+$. We want to place some intervention zones (for instance a predator reservoir) in the domain such that the protection remains as high as possible throughout the season. Then, we introduce a control variable~$u\in\LL^{\infty}(\Omega)$ such that~$0\leq u \leq 1$ \textit{a.e.} in~$\Omega$, describing those interventions: in particular if~$u=1$ the intervention is maximal and if~$u=0$ there is no intervention. Obviously at the position where $u=1$ the protection does not degrade, whereas at the position~$u=0$ the degradation of the protection is maximal. Given an amount of possible total intervention, i.e.~$\int_\Omega u \leq L$, where~$L\in[0,|\Omega|]$, we focus on the question of knowing where and how to intervene in order to maximize the protection during a given period of time $[0,T]$. Thus this leads to the following optimal control problem
\begin{equation}\label{optimalcontrol}
            \max\limits_{ \substack{ u\in \mathcal{U}_{ad} \\  } } \; \mathcal{J}(u), 
\end{equation}
where 
$$
\mathcal{U}_{ad}:=\left\{ u\in\LL^{\infty}(\Omega) \mid 0\leq u \leq 1 \text{ \textit{a.e.} in } \Omega, \text{ and } \int_\Omega u (x)\mathrm{d}x \leq L \right\},
$$
and where $\mathcal{J}$ is the cost functional defined by
$$
\fonction{\mathcal{J}}{\LL^{\infty}(\Omega)}{\R}{u}{\mathcal{J}(u) := \displaystyle\int_0^T \int_\Omega p_u (x,t)\mathrm{d}x\mathrm{d}t,}
$$
and $p_u$ is the unique weak solution of the protection problem~\eqref{modele} for the control $u\in\mathcal{U}_{ad}$.

\medskip

\paragraph{Notations} We denote by $\LL^2(\Omega), \HH^1(\Omega), \HH^1 _0(\Omega)$, the usual Lebesgue and Sobolev spaces endowed
with their standard norms, by $\mathrm{B}_n(0, r)$ the Euclidean open ball of~$\R^n$ centered at $0$ with radius~$r > 0$, and by~$\chi_{\mathrm{C}}$ the characteristic function of $\mathrm{C}\subset\R^n$ defined by~$\chi_{\mathrm{C}} (x)=1$ if~$x\in \mathrm{C}$, and $0$ if~$x\notin \mathrm{C}$. At some points of this paper, we will use the notation~$\R^n=\R^{n_1}\times\R^{n_2}$, if $n\geq2$, where~$n_1\in\N^*$,~$n_2\in\N^*$, such that~$n_1+n_2=n$. In that case $x=(x_1,...,x_{n_1})$ will refer to the variables in~$\R^{n_1}$ and~$y=(y_1,...,y_{n_2})$ to the variables in~$\R^{n_2}$. Finally, we denote $\mathcal{V}_{ad}\subset\mathcal{U}_{ad}$ the set of controls that saturate the integral constraint, i.e.,
\begin{equation}\label{setconstr}
    \mathcal{V}_{ad}:=\left\{ u\in\LL^{\infty}(\Omega) \mid 0\leq u \leq 1 \text{ \textit{a.e.} in } \Omega, \text{ and } \int_\Omega u (x)\,\mathrm{d}x = L \right\},
\end{equation}
and we call bang-bang control an element $u\in\mathcal{V}_{ad}$ such that $u\in\{0,1\}$ \textit{a.e.} in $\Omega$ and $\int_{\Omega}u(x)\,\mathrm{d}x = L$.

\medskip

\paragraph{Main results}

Our main results are summarized in the following theorem.

\begin{myTheorem}\label{maintheo}
     There exists a unique solution to the optimal control problem~\eqref{optimalcontrol} and it is a bang-bang control. Moreover:
    \begin{enumerate}[label={\rm (\roman*)}]
    \item If $\Omega:=\mathrm{B}_n(0,r)$, where $r>0$. Then $u^*:=\chi_{\mathrm{B}_n\left(0,(\frac{L}{\omega_n})^{\frac{1}{n}}\right)}$ is the unique solution of the optimal control problem~\eqref{optimalcontrol}, where $\omega_n:=|\mathrm{B}_n(0,1)|$; \label{firstitem}
    \item If $n\geq 2$, and~$\Omega=\mathrm{B}_{n_1}(0,r_1)\times\Omega_{n_2}\subset\R^{n_1} \times\R^{n_2}$, where $r_1>0$, and $\Omega_{n_2}$ is a nonempty bounded connected open subset of $\R^{n_2}$, then there exists a subset $\mathrm{E}\subset\Omega$ satisfying $|\mathrm{E}|=L$, which is symmetric with respect to the hyperplane~$x_i=0$ and that is convex in the $x_i$-direction, for all $i\in\{1,...,n_1\}$, such that~$u^*=\chi_\mathrm{E}$ is the unique solution of the optimal control problem~\eqref{optimalcontrol};\label{seconditem}
    \item If $n\geq2$, and  $\Omega=\mathrm{B}_{n_1}(0,r_1)\times \mathrm{B}_{n_2}(0,r_2)\subset\R^{n_1} \times\R^{n_2}$, where $(r_1,r_2)\in\R^+ _*\times\R^+ _*$, then there exists a subset $\mathrm{E}\subset\Omega$ satisfying $|\mathrm{E}|=L$, which is symmetric with respect to the hyperplane $x_i=0$ and with respect to the hyperplane $y_j=0$, and that is convex in the $x_i$-direction, convex in the $y_j$-direction, for all~$(i,j)\in\{1,...,n_1\}\times\{1,...,n_2\}$, and star-shaped at $0\in\mathrm{E}$, such that~$u^*=\chi_\mathrm{E}$ is the unique solution of the optimal control problem~\eqref{optimalcontrol}.\label{thirditem}
\end{enumerate}
\end{myTheorem}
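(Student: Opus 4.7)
The plan is to linearize the problem via an adjoint-type state. First, define $q\in\HH^1_0(\Omega)$ as the unique weak solution of $-D\Delta q + q = 1$ in $\Omega$; the strong maximum principle yields $q>0$ inside $\Omega$. Writing $w:=p_u-1\in\HH^1_0(\Omega)$, which satisfies $-D\Delta w + w = \phi(u)-1$, multiplying by $q$ and integrating by parts (all boundary terms cancel), then integrating in $t\in(0,T)$ using $\phi(u)(x,t)=\mathrm{e}^{-\alpha(1-u(x))t}$, yields the key formula
$$
\mathcal{J}(u) \;=\; T|\Omega| \,+\, \int_\Omega F(u(x))\, q(x)\, \mathrm{d}x,\qquad F(s):=\int_0^T \bigl(\mathrm{e}^{-\alpha(1-s)t}-1\bigr)\,\mathrm{d}t.
$$
A direct computation shows that $F$ is smooth, strictly increasing, and strictly convex (since $F''(s)=\int_0^T \alpha^2 t^2 \mathrm{e}^{-\alpha(1-s)t}\,\mathrm{d}t>0$), with $F(1)=0$ and $F(0)=\alpha^{-1}(1-\mathrm{e}^{-\alpha T})-T<0$.

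Existence, uniqueness, and the bang-bang character will follow from a chord inequality combined with the bathtub principle. Strict convexity of $F$ together with $F(1)=0$ gives $F(s)\leq (1-s)F(0)$ on $[0,1]$, with equality iff $s\in\{0,1\}$. Since $q>0$, this yields
$$
\mathcal{J}(u) \;\leq\; T|\Omega| + F(0)\int_\Omega (1-u)\, q\,\mathrm{d}x,
$$
again with equality iff $u$ is bang-bang. The right-hand side is affine in $u$; since $F(0)<0$ and $q>0$, the bathtub principle identifies its maximum over $\mathcal{U}_{ad}$ as $u^*:=\chi_{\{q\geq c^*\}}$, where $c^*$ is chosen so that $|\{q\geq c^*\}|=L$. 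Elliptic regularity with constant (hence analytic) coefficients makes $q$ real-analytic in $\Omega$ and non-constant, so $|\{q=c^*\}|=0$ and $u^*$ is determined up to a null set. Being bang-bang, $u^*$ saturates the chord inequality and attains the affine upper bound, hence is the unique global maximizer of $\mathcal{J}$ on $\mathcal{U}_{ad}$.

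The geometric conclusions then reduce to the geometry of super-level sets of $q$. For (i), rotational invariance of $\Omega=\mathrm{B}_n(0,r)$ and uniqueness of the PDE force $q(x)=Q(|x|)$; a short analysis of the radial ODE $-D\rho^{1-n}(\rho^{n-1}Q')'+Q=1$ with $Q'(0)=0$, $Q(r)=0$ shows that $Q$ is strictly decreasing, so $E^*=\{q\geq c^*\}$ is the centered ball of radius $(L/\omega_n)^{1/n}$. For (ii), the tool is Steiner symmetrization comparison for the coercive operator $-D\Delta+\I$: the domain $\mathrm{B}_{n_1}(0,r_1)\times\Omega_{n_2}$, the right-hand side $1$, and the null boundary data are all invariant under Steiner rearrangement in any hyperplane $\{x_i=0\}$, $i\leq n_1$, forcing $q$ to coincide with its Steiner rearrangement in the $x_i$-variable; this makes $E^*$ symmetric across $\{x_i=0\}$ and convex in the $x_i$-direction for every such $i$. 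For (iii), applying the (ii)-argument in every $x_i$- and every $y_j$-hyperplane yields symmetry and slice-convexity in each of the $n_1+n_2$ coordinate directions; star-shapedness at $0$ then follows by a short combinatorial step: for $(x^0,y^0)\in E^*$, symmetry across $\{x_i=0\}$ places the sign-flipped point in $E^*$ and $x_i$-slice convexity fills in the segment $\{(\ldots,tx^0_i,\ldots,y^0):t\in[-1,1]\}\subset E^*$; iterating in every $i$ and every $j$ delivers $\lambda(x^0,y^0)\in E^*$ for every $\lambda\in[0,1]$.

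The main technical hurdle is the Steiner symmetrization step in (ii)--(iii), since the classical Talenti/Alvino--Trombetti rearrangement inequalities are normally formulated for $-\Delta$ alone; adapting them to the zero-order shift $-D\Delta+\I$ is most cleanly done through the variational characterization of $q$ (Steiner rearrangement preserves $\|v\|_{\LL^2}^2$ and $\int_\Omega v$ while not increasing Dirichlet energy, so strict convexity of the energy forces $q$ to equal its Steiner rearrangement). Everything else — the chord inequality, the bathtub principle, and the analyticity-based measure-zero level sets — is standard once the formula for $\mathcal{J}(u)$ has been established.
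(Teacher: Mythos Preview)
Your proof is correct, and it takes a genuinely different and more direct route than the paper's.

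The paper proceeds in two stages: first it computes $\mathrm{d}^2\mathcal{J}$ to show strict convexity of $\mathcal{J}$, reducing to extreme points of $\mathcal{V}_{ad}$, and then on bang-bang controls it introduces the time-averaged protection $P_u$ and an auxiliary linear problem in $Z_f$ to which it applies the bathtub argument. For items (i)--(iii) it then invokes Talenti/Alvino--Trombetti--type comparison theorems (Schwarz for (i), the Chiacchio Steiner comparison with zeroth-order term for (ii)--(iii)), applied to $Z_{f^*}$, and concludes by uniqueness that $f^*$ equals its own symmetrization.

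You collapse the first stage into a single identity $\mathcal{J}(u)=T|\Omega|+\int_\Omega F(u)\,q$ obtained by one adjoint pairing, and replace the ``strict convexity $\Rightarrow$ extreme points $\Rightarrow$ bijection'' chain by the chord inequality $F(s)\le(1-s)F(0)$, which simultaneously produces the bang-bang conclusion and the affine bathtub bound. For the geometric items you work directly with the adjoint $q$ rather than with $Z_{f^*}$: radial ODE analysis for (i), and for (ii)--(iii) the variational characterization of $q$ together with the P\'olya--Szeg\H{o} inequality for Steiner rearrangement (plus invariance of $\|v\|_{\LL^2}$ and $\int v$), which forces $q=S_{(n_1,\cdot)}(q)$ without quoting the elliptic comparison theorems. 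This buys you a self-contained argument that avoids the literature on comparison results with lower-order terms; what the paper's route buys is that those comparison theorems, once cited, make the symmetrization step a one-liner. You also make explicit the analyticity argument ensuring $|\{q=c^*\}|=0$ (and hence the existence of a level $c^*$ with $|\{q\ge c^*\}|=L$), a point the paper passes over silently.
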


\medskip

\paragraph{Symmetrization methods}
In this paper, symmetrization methods are used in order to determine the shape of the optimal intervention for particular crop field geometry. They consist in transforming one set into another with symmetry properties. Two methods are mainly used: the \textit{Schwarz symmetrization} and the \textit{Steiner symmetrization}. In particular, they are used in the literature to compare solutions of partial differential equations, establishing relations between the norm of the original solutions and that of their symmetrized counterparts. The Schwarz symmetrization (see, e.g.,~\cite{Burchard,Henrot,KAW,KESAVAN}) consists in rearranging the level sets of a function
in balls of the same measure in order to create a new function that is symmetric and radially non-increasing. In particular, this method is used to prove the well-known Faber-Krahn inequality (see, e.g,~\cite[Chapter 3, Section 3.2]{Henrot}). Comparison results using Schwarz symmetrization started with the pioneering work~\cite{TAL} where the symmetrized problem is given by the Dirichlet Laplacian operator. This result is extended in~\cite{ALVIN,BANDLE2,Chiti,PLL,TROMBV} and~\cite[Section 9]{Trombetti}) for second-order elliptic operators including lower-order terms. We also refer to~\cite{ALVNIT,AMATO,LANG} for Neumann and Robin boundary conditions and to~\cite{ALVIN, BANDLE,MOSS, VAZ} for parabolic equations. The Steiner symmetrization (see, e.g.,~\cite{Baernstein,Henrot,KAW}) is a partial symmetrization which consists in symmetrizing a set with respect to a hyperplane. In particular, when applied to a level set of a function leads to a new function that is symmetric with respect to a hyperplane. In~\cite{LIONS2} a comparison result is proved for Dirichlet boundary conditions and second-order elliptic operators without lower-order terms. Generalizations can be found, for instance, in~\cite{CHIA2,CHIA}, where the authors deal with the first and zeroth-order term. We also refer to~\cite{FERMER, FENG} for Neumann boundary conditions and to~\cite{DIAGOM} for parabolic equations. We conclude this paragraph by mentioning the works~\cite{BURT,CUCCU,BEH,FERO,VOAS} where the authors studied some optimization problems using symmetrization methods.

\medskip

\paragraph{Summary of the methodology} Our methodology is based on the theory of optimal control and on comparison results for elliptic equations via symmetrization methods.
Precisely, we prove that if there exists a solution to the optimal control problem~\eqref{optimalcontrol}, then it must saturates the integral constraint and, on the other hand, we prove that the cost functional~$\mathcal{J}$ is stricly convex by computing its twice Fréchet differential. Thus, the supremum of $\mathcal{J}$ on~$\mathcal{U}_{ad}$ coincides with the supremum of~$\mathcal{J}$ on the bang-bang controls set. Then, using a suitable bijection, we prove the existence and the uniqueness of a solution among bang-bang controls. For particular domains, we are able to prove some results about the shape of the optimal intervention using symmetrization methods.

\medskip

\paragraph{Paper structure} The paper is organized as follows. In Section~\ref{preliminaries}, we prove all preliminary results required to takle Theorem~\ref{maintheo}. In  Section~\ref{mainproof}, we focus on the proof of Theorem~\ref{maintheo} and, in Section~\ref{addrem}, additional results and remarks are presented. In Section~\ref{num}, numerical simulations are performed in order to illustrate our results. Finally, in Appendix~\ref{remind} some notions and results on Schwarz and Steiner symmetrizations are recalled, and the relation between extreme points and bang-bang controls is reminded in Appendix~\ref{appendixB}.

\section{Preliminary results}\label{preliminaries}

This section is devoted to the statement of some preliminary results needed to prove the main Theorem~\ref{maintheo}. In a nutshell, we first prove here that if solutions to the optimal control problem~\eqref{optimalcontrol} exist, then they must saturate the integral constraint. Thus, our analysis is reduced to the set $\mathcal{V}_{ad}\subset \mathcal{U}_{ad}$ which is the set of controls that saturate the integral constraint (see~\eqref{setconstr}). Then, we prove that the cost functional~$\mathcal{J}$ is strictly convex on $\mathcal{V}_{ad}$ and thus its supremum on $\mathcal{V}_{ad}$ coincides with its supremum on the set of extreme points of $\mathcal{V}_{ad}$, i.e. on bang-bang controls set.

Let us recall that, for all $u\in\mathcal{U}_{ad}$, the existence and uniqueness of the weak solution~$p_u\in\LL^2(]0,T[,\HH^1  (\Omega))$ of the protection problem~\eqref{modele} are guaranteed by the Riesz representation theorem.

\begin{myProp}\label{sature}
Let us assume that there exists $u^*\in\mathcal{U}_{ad}$ such that $\mathcal{J}(u^*)\geq \mathcal{J}(u)$ for all~$u\in\mathcal{U}_{ad}$. Then,
$$
\int_{\Omega} u^* (x)\,\mathrm{d}x=L.
$$
\end{myProp}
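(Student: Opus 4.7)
The plan is to argue by contradiction: assume that a maximizer $u^*$ satisfies $\int_\Omega u^*\,\mathrm{d}x < L$ and construct a perturbation $\tilde u\in\mathcal{U}_{ad}$ with $\mathcal{J}(\tilde u)>\mathcal{J}(u^*)$. The whole strategy rests on the monotonicity of the map $u\mapsto p_u$, which follows from the explicit formula $\phi(u)(x,t)=\mathrm{e}^{-\alpha(1-u(x))t}$ and the standard comparison principle for the elliptic equation $p-D\Delta p = \phi(u)$ with Dirichlet data $1$.

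First, I would observe that if $\int_\Omega u^*\,\mathrm{d}x<L$, then, since $u^*\leq 1$ a.e.\ and $L\leq|\Omega|$, the set $A:=\{x\in\Omega : u^*(x)<1\}$ must have positive measure (otherwise $u^*=1$ a.e.\ and $\int_\Omega u^*=|\Omega|\geq L$). Selecting a measurable $B\subset A$ with $0<|B|<\infty$ and choosing $\varepsilon>0$ small enough that simultaneously $u^*+\varepsilon\chi_B\leq 1$ a.e.\ (possible by restricting $B$ to $\{u^*\leq 1-\varepsilon\}$) and $\int_\Omega(u^*+\varepsilon\chi_B)\,\mathrm{d}x\leq L$, I define $\tilde u:=u^*+\varepsilon\chi_B\in\mathcal{U}_{ad}$, which satisfies $\tilde u\geq u^*$ everywhere and $\tilde u>u^*$ on $B$.

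Next, I would compare the corresponding protections. From the explicit formula one immediately has $\phi(\tilde u)(x,t)\geq \phi(u^*)(x,t)$ on $\Omega\times]0,T[$, with strict inequality on $B\times]0,T]$. Setting $w:=p_{\tilde u}-p_{u^*}$, the pair $(w,t)$ solves, for each $t\in]0,T[$, the elliptic problem
\begin{equation*}
w - D\Delta w = \phi(\tilde u)-\phi(u^*)\geq 0 \text{ in } \Omega,\qquad w=0 \text{ on } \partial\Omega.
\end{equation*}
The weak maximum principle yields $w\geq 0$ in $\Omega\times]0,T[$. To get the strict sign of $\mathcal{J}(\tilde u)-\mathcal{J}(u^*)$, I would integrate the equation in space: for $t>0$,
\begin{equation*}
\int_\Omega w(x,t)\,\mathrm{d}x = \int_\Omega\bigl(\phi(\tilde u)-\phi(u^*)\bigr)(x,t)\,\mathrm{d}x + D\int_{\partial\Omega}\partial_\nu w(x,t)\,\mathrm{d}S(x),
\end{equation*}
where the boundary term is nonnegative since $w\geq 0$ inside and vanishes on $\partial\Omega$, while the source term is strictly positive for every $t>0$ because $\tilde u>u^*$ on the set $B$ of positive measure.

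Integrating over $t\in]0,T[$ would then give $\mathcal{J}(\tilde u)-\mathcal{J}(u^*)=\int_0^T\int_\Omega w(x,t)\,\mathrm{d}x\,\mathrm{d}t>0$, contradicting the optimality of $u^*$. The only slightly delicate point is the sign of the Dirichlet boundary flux contribution (or, equivalently, the need for a weak formulation to avoid invoking $\partial_\nu w$ directly); I would handle it by testing the weak formulation with a suitable nonnegative function, or simply by noting that the solution operator $f\mapsto p$ for $p-D\Delta p = f$, $p|_{\partial\Omega}=1$, is affine and preserves strict order of the source in $\mathrm{L}^1$ (integrating $p$ against the positive Green function $G$ of $\mathrm{Id}-D\Delta$ with Dirichlet data, one has $\int_\Omega w\,\mathrm{d}x = \int_\Omega G\cdot(\phi(\tilde u)-\phi(u^*))\,\mathrm{d}x>0$), which is the cleanest way to turn the pointwise monotonicity into the strict integral inequality needed.
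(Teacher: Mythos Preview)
Your overall strategy---contradiction via a monotone perturbation and the comparison principle---is exactly the paper's. One correction, though: your claim that the boundary term $D\int_{\partial\Omega}\partial_\nu w\,\mathrm{d}S$ is nonnegative has the wrong sign. Since $w\geq 0$ in $\Omega$ and $w=0$ on $\partial\Omega$, the \emph{outward} normal derivative satisfies $\partial_\nu w\leq 0$, so that integration-by-parts identity only gives $\int_\Omega w \leq \int_\Omega(\phi(\tilde u)-\phi(u^*))$, which does not directly yield $\int_\Omega w>0$. Your fallback via the adjoint/Green representation is correct and salvages the argument: writing $\int_\Omega w = \int_\Omega q\,(\phi(\tilde u)-\phi(u^*))$ with $q>0$ the solution of $q-D\Delta q=1$, $q|_{\partial\Omega}=0$, gives strict positivity immediately (this $q$ is precisely the adjoint state the paper introduces later).

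For comparison, the paper avoids both the flux computation and the Green kernel: it simply notes that $p_{u_\lambda}\geq p_{u^*}$ together with $\mathcal{J}(u_\lambda)=\mathcal{J}(u^*)$ forces $p_{u_\lambda}=p_{u^*}$ a.e., whence the PDE gives $\phi(u_\lambda)=\phi(u^*)$ and thus $u_\lambda=u^*$, contradicting the construction. This ``equality case'' argument is shorter and needs only the weak maximum principle plus injectivity of $s\mapsto \mathrm{e}^{-\alpha(1-s)t}$.
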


\begin{proof}
Assume that $\int_{\Omega} u^* (x)\,\mathrm{d}x<L$. Then there exists $\lambda>0$ such that $\int_{\Omega} (u^*+\lambda)=L$. Let us denote~$u_{\lambda} := u^*+\lambda \chi_{\{x\in\Omega | u^*(x) +\lambda \leq 1\}} \in\mathcal{U}_{ad}$, then $u_\lambda\geq u^*$ \text{a.e.} in $\Omega$ and $u_\lambda>u^*$ on a set of positive measure. 
Thus, since $\phi(u^*)\leq\phi(u_\lambda)$, one deduces from the maximum principle that~$p_{u_\lambda} \geq p_{u^*}$ \textit{a.e.} in $\Omega\times]0,T[$. 
Hence $\mathcal{J}(u_{\lambda})\geq\mathcal{J}(u^*)$. It follows that $\mathcal{J}(u_{\lambda})=\mathcal{J}(u^*)$, therefore $p_{u_\lambda}=p_{u^*}$ \textit{a.e.} in~$\Omega\times]0,T[$ and one deduces that $u_\lambda = u^*$ \textit{a.e.} in~$\Omega\times]0,T[$ which is a contradiction.
\end{proof}

From Proposition~\ref{sature}, it follows that 
\begin{equation*}
            \sup\limits_{ \substack{ u\in \mathcal{U}_{ad} \\  } } \; \mathcal{J}(u)=\sup\limits_{ \substack{ u\in \mathcal{V}_{ad} \\  } } \; \mathcal{J}(u).
\end{equation*}

Now let us prove that $\mathcal{J}$ is Fréchet differentiable and twice Fréchet differentiable in~$\LL^{\infty}(\Omega)$.

\begin{myProp}\label{differentielle}
For all $u\in\LL^{\infty}(\Omega)$, the cost functional $\mathcal{J}$ is Fréchet differentiable at~$u$ and its differential~$ \mathrm{d}\mathcal{J}(u)$ is given by
\begin{equation*}
 \mathrm{d}\mathcal{J}(u)\left(h\right)=\int_{\Omega}h(x) q(x) w\circ u (x) \,\mathrm{d}x , \qquad\forall h\in\LL^\infty(\Omega),
\end{equation*}
where $q\in\HH^1 _0(\Omega)$ denotes the adjoint state being the unique weak solution to the Dirichlet problem
\begin{equation}\label{adjoint}
\arraycolsep=2pt
\left\{
\begin{array}{rcll}
  q - D\Delta q & = & 1 & \text{ in } \Omega , \\
 q & = & 0  & \text{ on } \partial{\Omega},\\
\end{array}
\right.
\end{equation}
and $w\in\LL^{\infty}(\R)$ is the switching function given by 
\begin{equation*}
\displaystyle\fonction{w}{\R}{\R^+}{s}{\displaystyle \left\{
\begin{array}{ccll}
\alpha \frac{T^2}{2} 	&   & \text{ if } s=1, \\ [5pt]
\frac{\left(1+\mathrm{e}^{-\alpha\left(1-s\right)T}\left(-\alpha(1-s)T-1\right)\right)}{{\alpha\left(1-s\right)^2}}	&   & \text{ if } s\neq1.
\end{array}
\right.}
\end{equation*}
\end{myProp}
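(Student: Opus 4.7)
The plan is to reduce the problem to the differentiability of the pointwise Nemytskii map $u\mapsto\phi(u)$ and then to absorb the derivative of $p_u$ through duality with the adjoint state $q$. First, I would decompose $p_u=\tilde p_u+p_0$, where $p_0\in\HH^1(\Omega)$ is the fixed solution of $p_0-D\Delta p_0=0$ with $p_0=1$ on $\partial\Omega$, and $\tilde p_u(\cdot,t)\in\HH^1_0(\Omega)$ is the unique weak solution of $\tilde p_u-D\Delta\tilde p_u=\phi(u)(\cdot,t)$ with homogeneous Dirichlet data. Since $p_0$ does not depend on $u$, and since $\phi\mapsto\tilde p_\phi$ is linear and continuous (by Lax--Milgram applied $t$-wise, then integrating in time), the task reduces to differentiating $u\mapsto\phi(u)$ from $\LL^\infty(\Omega)$ into $\LL^\infty(\Omega\times]0,T[)$.

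Second, I would exploit the explicit pointwise identity
\[
\phi(u+h)(x,t)-\phi(u)(x,t)=\mathrm{e}^{-\alpha(1-u(x))t}\bigl(\mathrm{e}^{\alpha h(x)t}-1\bigr),
\]
together with the elementary bound $|\mathrm{e}^z-1-z|\leq\tfrac{z^2}{2}\mathrm{e}^{|z|}$ applied to $z=\alpha h(x)t$, which is uniformly bounded for $\|h\|_{\LL^\infty}$ small and $t\in[0,T]$. This yields
\[
\phi(u+h)-\phi(u)-\alpha t\,\mathrm{e}^{-\alpha(1-u)t}h=O\bigl(\|h\|_{\LL^\infty}^2\bigr)
\]
in $\LL^\infty(\Omega\times]0,T[)$. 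Transporting this estimate through the continuous linear operator $\phi\mapsto\tilde p_\phi$, one obtains Fréchet differentiability of $u\mapsto\tilde p_u$ with directional derivative $\pi(\cdot,t)\in\HH^1_0(\Omega)$ solving $\pi-D\Delta\pi=\alpha t\,\mathrm{e}^{-\alpha(1-u)t}h$ (with zero boundary value), and hence $\mathrm{d}\mathcal{J}(u)(h)=\int_0^T\int_\Omega\pi(x,t)\,\mathrm{d}x\,\mathrm{d}t$.

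Third, to rewrite this expression explicitly in terms of $h$, I would invoke the adjoint state $q\in\HH^1_0(\Omega)$ solving~\eqref{adjoint}. Since $\pi(\cdot,t)$ and $q$ both vanish on $\partial\Omega$, the symmetry of the bilinear form associated with $\I-D\Delta$ yields
\[
\int_\Omega \pi(x,t)\,\mathrm{d}x=\int_\Omega\pi(x,t)\bigl(q-D\Delta q\bigr)(x)\,\mathrm{d}x=\int_\Omega q(x)\,\alpha t\,\mathrm{e}^{-\alpha(1-u(x))t}h(x)\,\mathrm{d}x.
\]
Integrating in $t$ and applying Fubini produces the claimed formula with $w(s):=\int_0^T\alpha t\,\mathrm{e}^{-\alpha(1-s)t}\,\mathrm{d}t$; an elementary integration by parts in $t$ then gives the stated closed form (the case $s=1$ being a removable singularity, handled either directly or via L'Hôpital). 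The main subtlety, rather than a genuine obstacle, is securing Fréchet (and not merely Gâteaux) differentiability: this rests on the uniformity of the quadratic remainder of the exponential above, which itself holds precisely because $u$ and $h$ are uniformly bounded in $\LL^\infty(\Omega)$.
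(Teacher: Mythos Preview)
Your argument is correct and follows essentially the same route as the paper: differentiate $u\mapsto p_u$ by differentiating the source term $\phi(u)$, identify the linearized state $\dot p_u(h)$ as the solution of $\dot p_u(h)-D\Delta\dot p_u(h)=\alpha t\,\mathrm{e}^{-\alpha(1-u)t}h$ with zero boundary data, and then pair with the adjoint $q$ to collapse $\int_0^T\!\int_\Omega\dot p_u(h)$ into $\int_\Omega h\,q\,w\circ u$. Where the paper simply writes ``by standard computations'' for the Fr\'echet differentiability of $u\mapsto p_u$, you supply those computations explicitly (the decomposition $p_u=\tilde p_u+p_0$ to isolate the nonhomogeneous boundary data, and the quadratic remainder bound $|\mathrm{e}^z-1-z|\leq\tfrac{z^2}{2}\mathrm{e}^{|z|}$ to secure genuine Fr\'echet rather than merely G\^ateaux differentiability); this is a welcome clarification but not a different strategy.
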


\begin{proof}
By standard computations, the  map $u\in\LL^{\infty}(\Omega)\mapsto p_{u}\in\LL^2(]0,T[,\HH^1 (\Omega))$ is Fréchet differentiable at every~$u\in\LL^{\infty}(\Omega)$ with its differential at $u$ given, for all $h\in\LL^{\infty}(\Omega)$, by $\dot{p}_u (h)\in \LL^2(]0,T[,\HH^1(\Omega))$ solution of
\begin{equation*}
\arraycolsep=2pt
\left\{
\begin{array}{rcll}
  \dot{p}_u (h)- D\Delta \dot{p}_u (h) & = &  \alpha h t \mathrm{e}^{-\alpha \left(1-u\right)t} & \text{ in } \Omega\times]0,T[ , \\
\dot{p}_u (h) & = & 0  & \text{ on } \partial{\Omega}\times]0,T[.
\end{array}
\right.
\end{equation*}
Then, it follows that $\mathcal{J}$ is Fréchet differentiable at any $u\in\LL^{\infty}(\Omega)$ with its differential at $u$ given by
$$
\mathrm{d}\mathcal{J}(u)(h)=\int_0^T\int_{\Omega} \dot{p}_u (h)(x,t)\,\mathrm{d}x\, \mathrm{d}t, \qquad \forall h\in\LL^{\infty}(\Omega).
$$
For all $(u,h)\in\LL^{\infty}(\Omega)\times\LL^{\infty}(\Omega)$, let us take the adjoint state $q\in\HH^1 _0(\Omega)$, solution to Problem~\eqref{adjoint}, as test function in the weak variational formulation of $\dot{p}_u(h)$ and, for almost all $t\in]0,T[$,~$\dot{p}_u (h)(t,\cdot)\in\HH^1 _0(\Omega)$ as test function in the weak variational formulation of~$q$. Then, one can deduce that
$$
\int_0^T\int_{\Omega} \dot{p}_u(h)(x,t)\mathrm{d}x \mathrm{d}t=\int_0 ^T \int_{\Omega} \alpha h(x) t \mathrm{e}^{-\alpha \left(1-u(x)\right)t}  q(x) \mathrm{d}x \mathrm{d}t.
$$
We define the switching function by $w(s):=\alpha\int_0 ^T t\mathrm{e}^{-\alpha \left(1-s\right)t}\mathrm{d}t\in\R^+$, for almost all $s\in\R$, thus
$$
\mathrm{d}\mathcal{J}(u)(h)=\int_{\Omega}h(x) q(x) w\circ u (x) \mathrm{d}x,
$$
which concludes the proof.
\end{proof}

\begin{myProp}
    For all $u\in\LL^{\infty}(\Omega)$, the cost functional $\mathcal{J}$ is twice Fréchet differentiable at~$u$ and its second differential $\mathrm{d}^2 \mathcal{J}(u)$ is given by 
$$
\mathrm{d}^2 \mathcal{J}(u)\left(h_1,h_2\right)=\int_{\Omega}\alpha^2 q(x) h_1 (x) h_2 (x)  \left(\int_{0}^{T}t^2\mathrm{e}^{-\alpha(1-u(x))t} \mathrm{d}t\right)\mathrm{d}x, \quad\forall \left(h_1,h_2\right)\in\LL^\infty(\Omega)\times\LL^\infty(\Omega).
$$
Moreover, for all $u\in\LL^{\infty}(\Omega)$, $\mathrm{d}^2 \mathcal{J}(u)$ is positive definite thus $\mathcal{J}$ is a strictly convex function on~$\LL^{\infty}(\Omega)$.
\end{myProp}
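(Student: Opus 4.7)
The plan is to differentiate the first-order formula from Proposition~\ref{differentielle} a second time. Writing $\mathrm{d}\mathcal{J}(u)(h_1) = \int_\Omega h_1(x) q(x) (w \circ u)(x) \, \mathrm{d}x$, I observe that neither $h_1$ nor the adjoint state $q$ depends on $u$, since the Dirichlet problem~\eqref{adjoint} is independent of the control. Thus the entire $u$-dependence is carried by the Nemytskii composition $u \mapsto w \circ u$, and computing $\mathrm{d}^2\mathcal{J}(u)$ reduces to computing the Fréchet differential of this composition in $\LL^{\infty}(\Omega)$.

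I would first verify that $w \in C^\infty(\R)$ by differentiating under the integral sign in $w(s) = \alpha \int_0^T t \mathrm{e}^{-\alpha(1-s)t} \, \mathrm{d}t$, which is legitimate because the integrand and all its $s$-derivatives are continuous and uniformly bounded on every compact $s$-interval times $[0,T]$. A direct computation gives $w'(s) = \alpha^2 \int_0^T t^2 \mathrm{e}^{-\alpha(1-s)t} \, \mathrm{d}t$. To promote this into Fréchet differentiability of the associated superposition operator on $\LL^{\infty}(\Omega)$, one applies the pointwise Taylor expansion $w(u(x) + h(x)) - w(u(x)) - w'(u(x))h(x) = \tfrac{1}{2} w''(\xi(x)) h(x)^2$ and uses a uniform bound on $w''$ over the compact interval $[-\|u\|_\infty - 1, \|u\|_\infty + 1]$ to obtain an $\|h\|_\infty^2$ remainder in $\LL^{\infty}(\Omega)$. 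Substituting the resulting derivative $h_2 \mapsto w'(u) h_2$ back into the first-order formula yields exactly
$$
\mathrm{d}^2\mathcal{J}(u)(h_1, h_2) = \int_\Omega \alpha^2 q(x) h_1(x) h_2(x) \left(\int_0^T t^2 \mathrm{e}^{-\alpha(1-u(x))t} \, \mathrm{d}t\right) \mathrm{d}x,
$$
as claimed.

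For the strict convexity, I would specialise to $h_1 = h_2 = h$ and examine the sign of the integrand pointwise. The time integral $\int_0^T t^2 \mathrm{e}^{-\alpha(1-u(x))t} \, \mathrm{d}t$ is strictly positive everywhere, and $h^2 \geq 0$ with strict inequality precisely on $\{h \neq 0\}$. The remaining factor $q$ is the weak $\HH^1_0$-solution of $q - D\Delta q = 1$ with $q = 0$ on $\partial\Omega$; the weak maximum principle forces $q \geq 0$, and the strong maximum principle applied to the uniformly elliptic operator $\mathrm{Id} - D\Delta$ with strictly positive right-hand side forces $q > 0$ almost everywhere in $\Omega$. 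Hence $\mathrm{d}^2\mathcal{J}(u)(h,h) > 0$ whenever $h$ does not vanish almost everywhere, and strict convexity of $\mathcal{J}$ on $\LL^{\infty}(\Omega)$ follows.

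The step I would scrutinise most carefully is the strict positivity of $q$ under the mere Lipschitz regularity of $\partial\Omega$: one cannot invoke the $C^2$-boundary version of the strong maximum principle, but Stampacchia's weak-form strong maximum principle for elliptic operators in divergence form with non-negative (indeed strictly positive) right-hand side is enough to conclude. Everything else --- differentiation under the integral and the superposition-operator estimate --- is routine once the uniform bounds on $w$ and its derivatives are made explicit.
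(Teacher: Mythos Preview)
Your argument is correct but follows a genuinely different route from the paper. The paper differentiates the solution map $u\mapsto p_u$ a second time at the PDE level, obtaining $\ddot{p}_u(h_1,h_2)\in\LL^2(]0,T[,\HH^1_0(\Omega))$ as the weak solution of $\ddot{p}_u-D\Delta\ddot{p}_u=\alpha^2 h_1 h_2\, t^2\mathrm{e}^{-\alpha(1-u)t}$ with homogeneous Dirichlet data, and then repeats the adjoint pairing with $q$ exactly as in Proposition~\ref{differentielle} to collapse $\int_0^T\!\int_\Omega \ddot{p}_u$ into the stated formula. You instead bypass the PDE entirely by differentiating the already-dualised expression $\mathrm{d}\mathcal{J}(u)(h_1)=\int_\Omega h_1\, q\, (w\circ u)$, reducing the problem to the Fr\'echet differentiability of the Nemytskii operator $u\mapsto w\circ u$ in $\LL^\infty(\Omega)$, which you handle with a clean Taylor-remainder bound. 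Your route is shorter and more elementary---it needs no second variational problem and no second adjoint computation---while the paper's route is more systematic, in that it establishes twice Fr\'echet differentiability of $u\mapsto p_u$ itself before passing to the functional. The strict-convexity step is identical in both proofs; your explicit remark that Stampacchia's weak-form strong maximum principle suffices under Lipschitz boundary regularity is a welcome precision the paper glosses over.
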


\begin{proof}
One can easily prove that the map $u\in\LL^{\infty}(\Omega)\mapsto p_{u}\in\LL^2(]0,T[,\HH^1(\Omega))$  is twice Fréchet differentiable at every~$u\in\LL^{\infty}(\Omega)$ with its second differential at $u$ given, for all $(h_1,h_2)\in\LL^{\infty}(\Omega)\times \LL^{\infty}(\Omega)$, by $\ddot{p}_u (h_1,h_2)\in \LL^2(]0,T[,\HH^1(\Omega))$ being the unique weak solution of
\begin{equation*}
\arraycolsep=2pt
\left\{
\begin{array}{rcll}
  \ddot{p}_u (h_1,h_2)- D\Delta \ddot{p}_u (h_1,h_2) & = &  \alpha^2  h_1 h_2 t^2 \mathrm{e}^{-\alpha \left(1-u\right)t} & \text{ in } \Omega\times]0,T[ , \\
\ddot{p}_u (h_1,h_2) & = & 0  & \text{ on } \partial{\Omega}\times]0,T[.
\end{array}
\right.
\end{equation*}
Thus, it can be deduced that $\mathcal{J}$ is twice Fréchet differentiable at any $u\in\LL^{\infty}(\Omega)$ with its second differential at $u$ given by
$$
\mathrm{d}^2\mathcal{J}(u)(h_1,h_2)=\int_0^T\int_{\Omega} \ddot{p}_u (h_1,h_2)(x,t)\mathrm{d}x \mathrm{d}t, \qquad \forall \left(h_1,h_2\right)\in\LL^{\infty}(\Omega)\times \LL^{\infty}(\Omega).
$$
Then, using the adjoint state $q\in\HH^1 _0(\Omega)$ one can prove in the same way as in Proposition~\ref{differentielle}, that 
$$
\int_0^T\int_{\Omega} \ddot{p}_u (h_1,h_2)(x,t)\mathrm{d}x \mathrm{d}t=\int_{\Omega}\alpha^2 q(x) h_1 (x) h_2 (x) \left(  \int_{0}^{T}t^2\mathrm{e}^{-\alpha(1-u(x))t}\mathrm{d}t\right)\mathrm{d}x,
$$
for all $u,h_1,h_2\in\LL^\infty(\Omega)$. Finally, by considering $h_2=h_1$, one gets 
$$
\mathrm{d}^2\mathcal{J}(u)\left(h_1,h_1\right)=\int_{\Omega}\alpha^2 q(x) h_1 (x) ^2 \left( \int_{0}^{T}t^2\mathrm{e}^{-\alpha(1-u(x))t}\mathrm{d}t\right) \mathrm{d}x.
$$
From the strong maximum principle one has $q>0$ \textit{a.e.} in $\Omega$, thus $\mathrm{d}^2\mathcal{J}(u)\left(h_1,h_1\right)\geq0$ with equality if and only if $h_1=0$ \textit{a.e.} in $\Omega$. Thus $\mathrm{d}^2\mathcal{J}(u)$ is positive definite and, since $u\in\LL^{\infty}(\Omega)$ is arbitrary, one concludes that $\mathcal{J}$ is a stricly convex function.
\end{proof}

Since $\mathcal{J}$ is stricly convex on $\mathcal{V}_{ad}$, it is well known that
\begin{equation*}
            \sup\limits_{ \substack{ u\in \mathcal{V}_{ad} \\  } } \; \mathcal{J}(u)=\sup\limits_{ \substack{ u\in \mathrm{Extr}(\mathcal{V}_{ad}) \\  } } \; \mathcal{J}(u), 
\end{equation*}
where $\mathrm{Extr}(\mathcal{V}_{ad})$ is the set of extreme points of $\mathcal{V}_{ad}$ defined by
$$
\mathrm{Extr}(\mathcal{V}_{ad}):=\left\{ u\in\mathcal{V}_{ad} \mid  v_1,v_2\in\mathcal{V}_{ad}, u=\frac{v_1+v_2}{2} \implies u=v_1=v_2 \right\},
$$
which corresponds to the set of bang-bang controls (see Proposition~\ref{ProofExtrBang}),
$$
\mathrm{Extr}(\mathcal{V}_{ad})=\left\{ u\in\LL^{\infty}(\Omega) \mid u\in\left\{0,1\right\} \text{ \textit{a.e.} in } \Omega, \text{ and } \int_{\Omega}u(x)\mathrm{d}x= L\right\}.
$$

\section{Proof of Theorem~\ref{maintheo}}\label{mainproof}
In this section, we prove Theorem~\ref{maintheo}. From the previous section, we know that if there exists a control which maximizes $\mathcal{J}$ on the bang-bang controls set, then it is also solution to the optimal control problem~\eqref{optimalcontrol}. Thus, inspired by~\cite{MAZ}, let us prove that there exists a unique solution among bang-bang controls.

\begin{proof}[Proof of the existence and uniqueness of a bang-bang solution]
For all $u\in\mathrm{Extr}(\mathcal{V}_{ad})$, let us consider $P_u:=\int_{0}^{T} p_u (\cdot,t)\mathrm{d}t /T  \in \HH^1(\Omega)$. Then $P_u$ is the unique weak solution of the Dirichlet problem
\begin{equation*}
\arraycolsep=2pt
\left\{
\begin{array}{rcll}
  P_u - D\Delta P_u & = &  \displaystyle \frac{1}{T}\int_{0}^T \mathrm{e}^{-\alpha(1-u)t}\mathrm{d}t & \text{ in } \Omega , \\
P_u & = & 1  & \text{ on } \partial{\Omega}.
\end{array}
\right.
\end{equation*}
Denote $\varphi(u):=  \int_{0}^T \mathrm{e}^{-\alpha(1-u)t}\mathrm{d}t /T \in\LL^{\infty}(\Omega)$, for all $u\in\mathrm{Extr}(\mathcal{V}_{ad})$. Therefore,
\begin{equation*}
\arraycolsep=2pt
\varphi (u):=\left\{
\begin{array}{rl}
  1 &    \text{ if } u=1 , \\
 \frac{1-\mathrm{e}^{-\alpha T}}{\alpha T} &    \text{ if } u=0,
\end{array}
\right.
\end{equation*}
and
$$
\int_{\Omega}\varphi(u)(x)\mathrm{d}x=\int_{\{u=1\}}\mathrm{d}x+\int_{\{u=0\}}\frac{1-\mathrm{e}^{-\alpha T}}{\alpha T}\mathrm{d}x=L+\frac{1-\mathrm{e}^{-\alpha T}}{\alpha T}\left(|\Omega|-L\right)=:M.
$$
Now consider
\begin{equation*}
    \mathcal{F} :=\left\{ f\in\LL^{\infty}(\Omega) \mid f\in\left\{\frac{1-\mathrm{e}^{-\alpha T}}{\alpha T}, 1\right\} \text{ \textit{a.e.} in } \Omega, \text{ and } \int_{\Omega}f(x)\mathrm{d}x= M \right\},
\end{equation*}

Then $\varphi$ is a bijective function from $\mathrm{Extr}(\mathcal{V}_{ad})$ to $\mathcal{F}$ with its inverse given by
\begin{equation*}
\arraycolsep=2pt
\varphi^{-1} (f):=\left\{
\begin{array}{rl}
  1 &    \text{ if } f=1 , \\
 0 &    \text{ if } f=\frac{1-\mathrm{e}^{-\alpha T}}{\alpha T},
\end{array}
\right.
\end{equation*}
for all $f\in\mathcal{F}$. 

Let us introduce the control problem given by
\begin{equation}\label{relaxpb}
            \max\limits_{ \substack{ f\in \mathcal{F} \\  } } \; J(f),
\end{equation}
where $J : f\in\mathcal{F} \mapsto \int_{\Omega} Z_f (x)\mathrm{d}x \in\R$, and where $Z_f\in\HH^1 _0(\Omega)$ is the unique weak solution of
\begin{equation}\label{relaxZ}
\arraycolsep=2pt
\left\{
\begin{array}{rcll}
  Z_f - D\Delta Z_f & = &  f & \text{ in } \Omega , \\
Z_f & = & 0  & \text{ on } \partial{\Omega}.
\end{array}
\right.
\end{equation}

Consider the adjoint state $q\in\HH^1 _0(\Omega)$, solution to Problem~\eqref{adjoint}, as test function in the weak variational formulation of $Z_f\in\HH^1 _0(\Omega)$, and $Z_f\in\HH^1 _0(\Omega)$ as test function in the weak variational formulation of $q\in\HH^1 _0(\Omega)$,  for all $f\in\mathcal{F}$. Then
one deduces that
$$
\int_\Omega Z_f (x)\mathrm{d}x = \int_\Omega q(x)f(x)\mathrm{d}x , \qquad \forall f\in\mathcal{F}.
$$
Now, let $\mathrm{E}:=\{x\in\Omega \mid q(x) \geq \xi \}$ where $\xi\geq0$ is chosen such that $|\mathrm{E}|=L$, and denote~$f^*:=\chi_\mathrm{E} +((1-\mathrm{e}^{-\alpha T})/\alpha T) \chi_{\Omega\backslash \mathrm{E}}$. Consider also $f\in\mathcal{F}$, then there exists $\mathrm{C}\subset\Omega$, $|\mathrm{C}|=L$ such that~$f=\chi_\mathrm{C} +((1-\mathrm{e}^{-\alpha T})/\alpha T) \chi_{\Omega\backslash \mathrm{C}}$. It follows that $\mathrm{C}=(\mathrm{C}\cap \mathrm{E}) \cup (\mathrm{C}\cap \Omega\backslash \mathrm{E})$ and $\mathrm{E}=(\mathrm{E}\cap \mathrm{C}) \cup (\mathrm{E}\cap \Omega\backslash \mathrm{C})$ where
$\mathrm{C}\cap \Omega\backslash \mathrm{E}=\{x\in\mathrm{C} \mid q(x) < \xi \}$ and $\mathrm{E}\cap \Omega\backslash \mathrm{C}=\{x\in\Omega\backslash\mathrm{C} \mid q(x) \geq \xi \}$. Note that, since~$|\mathrm{E}|=|\mathrm{C}|$, then $|\mathrm{E}\cap \mathrm{C}|+|\mathrm{E}\cap \Omega\backslash \mathrm{C}|=|\mathrm{C}\cap \mathrm{E}| + |\mathrm{C}\cap \Omega\backslash \mathrm{E}|$, thus $|\mathrm{E}\cap \Omega\backslash \mathrm{C}|=|\mathrm{C}\cap \Omega\backslash \mathrm{E}|$. One has
\begin{multline*}
    \int_\Omega q(x)f(x)\mathrm{d}x=\int_\Omega q(x)\left(f(x)-\frac{1-\mathrm{e}^{-\alpha T}}{\alpha T}\right)\mathrm{d}x+\left(\frac{1-\mathrm{e}^{-\alpha T}}{\alpha T}\right)\int_\Omega q(x)\mathrm{d}x \\=\left(1-\frac{1-\mathrm{e}^{-\alpha T}}{\alpha T}\right)\int_\Omega q(x)\chi_{\mathrm{C}}(x)\mathrm{d}x+\left(\frac{1-\mathrm{e}^{-\alpha T}}{\alpha T}\right)\int_\Omega q(x)\mathrm{d}x.
\end{multline*}
Moreover, since $q<\xi$ on $\mathrm{C}\cap \Omega\backslash \mathrm{E}$ and $|\mathrm{E}\cap \Omega\backslash \mathrm{C}|=|\mathrm{C}\cap \Omega\backslash \mathrm{E}|$, we have
\begin{multline*}
    \int_{\mathrm{C}} q(x)\mathrm{d}x  \leq \int_{\mathrm{C}\cap\mathrm{E}} q(x)\mathrm{d}x  + \xi \left|\mathrm{C}\cap \Omega\backslash \mathrm{E}\right|=\int_{\mathrm{C}\cap\mathrm{E}} q(x)\mathrm{d}x  + \int_{\mathrm{E}\cap \Omega\backslash \mathrm{C}}\xi\mathrm{d}x\\ \leq \int_{\mathrm{C}\cap\mathrm{E}} q(x)\mathrm{d}x  + \int_{\mathrm{E}\cap \Omega\backslash \mathrm{C}}q(x)\mathrm{d}x = \int_{\mathrm{E}}q(x)\mathrm{d}x  , 
\end{multline*}
with equality if and only if $|\mathrm{C}\cap \Omega\backslash \mathrm{E}|=0$, i.e., if and only if $\chi_{\mathrm{C}}=\chi_{\mathrm{E}}$ \textit{a.e.} on $\Omega$. Thus, one deduces
\begin{multline*}
    \int_\Omega Z_f (x)\mathrm{d}x=\int_\Omega q(x)f(x)\mathrm{d}x\leq\left(1-\frac{1-\mathrm{e}^{-\alpha T}}{\alpha T}\right)\int_\Omega q(x)\chi_{\mathrm{E}}(x)\mathrm{d}x+\left(\frac{1-\mathrm{e}^{-\alpha T}}{\alpha T}\right)\int_\Omega q(x)\mathrm{d}x\\=\int_{\Omega}q(x)f^*(x)\mathrm{d}x=\int_\Omega Z_f^* (x)\mathrm{d}x,
\end{multline*}
with equality if and only if $\chi_{\mathrm{C}}=\chi_{\mathrm{E}}$ \textit{a.e.} on $\Omega$, i.e., if and only $f=f^*$ \textit{a.e.} on $\Omega$. Hence, $f^*$ is the unique solution of the control problem~\eqref{relaxpb}.

Moreover, one has $P_u=Z_{\varphi(u)}+W$, for all $u\in\mathrm{Extr}(\mathcal{V}_{ad})$, where~$W\in\HH^1  (\Omega)$ is the unique weak solution of
\begin{equation}\label{annexeprob2}
\arraycolsep=2pt
\left\{
\begin{array}{rcll}
  W - D\Delta W & = &  0 & \text{ in } \Omega , \\
W & = & 1  & \text{ on } \partial{\Omega}.
\end{array}
\right.
\end{equation}
Therefore, by considering $u^*:=\varphi^{-1}({f^*})=\chi_{\mathrm{E}}\in\mathrm{Extr}(\mathcal{V}_{ad})$, it follows that
\begin{multline*}
    \int_{\Omega} P_{u^*}(x)\mathrm{d}x=\int_{\Omega} Z_{\varphi(u^*)} (x)\mathrm{d}x+\int_{\Omega}W(x)\mathrm{d}x\\\geq\int_{\Omega} Z_{\varphi(u)} (x)\mathrm{d}x+\int_{\Omega}W(x)\mathrm{d}x=\int_{\Omega} P_{u}(x)\mathrm{d}x, \qquad \forall u\in\mathrm{Extr}(\mathcal{V}_{ad}).
\end{multline*}
Thus, by definition of $P_u$, one gets
$$
\int_0 ^T \int_{\Omega} p_{u^*} (x,t)\mathrm{d}x \mathrm{d}t\geq \int_0 ^T \int_{\Omega} p_{u} (x,t)\mathrm{d}x \mathrm{d}t, \qquad \forall u\in\mathrm{Extr}(\mathcal{V}_{ad}),
$$
i.e., 
$$
\mathcal{J}(u^*)=\max\limits_{ \substack{ u\in \mathrm{Extr}(\mathcal{V}_{ad}) \\  } } \; \mathcal{J}(u), 
$$
and since $f^*$ is unique, it follows that $u^*$ is unique, which concludes the proof.
\end{proof}

\begin{myRem}\normalfont
Note that a nonhomogeneous Dirichlet condition in problem~\eqref{relaxZ} could be considered in order to avoid introducing $W\in\HH^1(\Omega)$ solution to problem~\eqref{annexeprob2}. Nevertheless, the homogeneous Dirichlet condition will be required to prove items~\ref{firstitem},\ref{seconditem} and \ref{thirditem} of Theorem~\ref{maintheo}.
\end{myRem}

Now let us prove item~\ref{firstitem} using the Schwarz symmetrization which is recalled in Appendix~\ref{Schwarz}.

\begin{proof}[Proof of item~\ref{firstitem}]
In the proof of the existence/uniqueness of a bang-bang solution, we deduced that there exists~$\mathrm{E}\in\Omega$, $|\mathrm{E}|=L$, such that~$f^*=\chi_\mathrm{E} +\frac{(1-\mathrm{e}^{-\alpha T})}{\alpha T} \chi_{\Omega\backslash \mathrm{E}}$ is the unique solution satisfying
$$
\int_{\Omega} Z_{f} (x)\mathrm{d}x\leq\int_{\Omega} Z_{f^*} (x)\mathrm{d}x, \qquad\forall f\in\mathcal{F},
$$
where $Z_{f}\in\HH^1 _0 (\Omega)$ is the unique weak solution of problem~\eqref{relaxZ} for $f\in\mathcal{F}$. Therefore, using the Schwarz symmetrization comparison theorem (see Theorem~\ref{theoAlv}), one deduces that,
\begin{equation}\label{inegali}
    \int_{\Omega} Z_{f^*}(x) \mathrm{d}x \leq \int_{\Omega^{\#}} v(x) \mathrm{d}x,
\end{equation}
where~$v\in\HH^1 _0(\Omega^{\#})$ is the unique weak solution of
\begin{equation*}
\arraycolsep=2pt
\left\{
\begin{array}{rcll}
  v - D\Delta v & = &  {f^*}^{\#} & \text{ in } \Omega^{\#} , \\
v & = & 0  & \text{ on } \partial{\Omega}^{\#},
\end{array}
\right.
\end{equation*}
and where $\Omega^\#$ is the symmetric rearrangement of $\Omega$ and ${f^*}^{\#}=\chi_{\mathrm{E}^{\#}} +((1-\mathrm{e}^{-\alpha T})/\alpha T)\chi_{\Omega^{\#}\backslash \mathrm{E}^{\#}}$ is the Schwarz symmetrization of $f^*$ (see~Example~\ref{indicSch}). Moreover, since $\Omega=\mathrm{B}_n (0,r)$, then $\Omega^{\#}=\Omega$, ${f^*}^{\#}\in\mathcal{F}$ and $v=Z_{{f^*}^{\#}}$. Thus, by Inequality~\eqref{inegali}, one has,
\begin{equation*}
 \int_{\Omega} Z_{f^*}(x)\mathrm{d}x=\int_{\Omega} Z_{{f^*}^{\#}}(x)\mathrm{d}x\geq\int_{\Omega} Z_{f} (x)\mathrm{d}x, \qquad\forall f\in\mathcal{F}.
\end{equation*}
Since the solution of the control problem~\eqref{relaxpb} is unique, it follows that $f^*={f^*}^{\#}$, and that $u^*:=\varphi^{-1}({{f^*}^{\#}})=\chi_{\mathrm{E}^{\#}}=\chi_{\mathrm{B}_n\left(0,(\frac{L}{\omega_n})^{\frac{1}{n}}\right)}$ is the unique solution of the optimal control problem~\eqref{optimalcontrol}.
\end{proof}

\begin{myRem}\normalfont
Note that, in the proof of the existence/uniqueness of a bang-bang solution, we have seen that the unique solution of the optimal control problem~\eqref{optimalcontrol} is given by $u^*=\chi_{\mathrm{E}}$ where~$\mathrm{E}:=\{ x\in\Omega \mid q(x)\geq\xi \}$, for $\xi\geq0$ such that $|\mathrm{E}|=L$. Therefore, since $q\in\HH^1_0 (\Omega)$ has the same symmetries as $\Omega$, then so does the set $\mathrm{E}$. In particular, if~$\Omega=\mathrm{B}_n (0,r)$, then~$q$ is symmetric with respect to any hyperplane and, one can prove in the same way as~\cite[Part I Chapter 2 section 2.4]{TRU}, that $q\in\HH^1_0(\Omega)$ is radially non-increasing. Thus $\mathrm{E}$ is a ball centered at $0$ with measure equal to~$L$.
\end{myRem}

To prove items~\ref{seconditem} and~\ref{thirditem} some properties and results on the Steiner symmetrization are used, and we refer to Appendix~\ref{Steiner}, for more details.

\begin{proof}[Proof of item~\ref{seconditem}]
The proof is close to the one of item~\ref{firstitem}. One knows that there exists~$\mathrm{E}\subset \Omega$, $|\mathrm{E}|=L$ such that
$
f^*=\chi_\mathrm{E} +\frac{(1-\mathrm{e}^{-\alpha T})}{\alpha T} \chi_{\Omega\backslash \mathrm{E}},
$
and $f^*$ is the unique solution of the control problem~\eqref{relaxpb}. Since $f^*\in\LL^{\infty}(\Omega)$ and $\Omega$ is bounded, it follows from Theorem~\ref{theoSt}, that,
\begin{equation*}
    \int_{\mathrm{B}_{n_1}(0,r_1)} Z_{f^*} (x,y) \mathrm{d}x \leq \int_{\mathrm{B}_{n_1}(0,r_1)} v(x,y)\mathrm{d}x,
\end{equation*}
for almost all $y\in\Omega_{n_2}$, where $Z_{f^*}\in\HH^1 _0 (\Omega)$ is the unique weak solution of Problem~\eqref{relaxZ} for the control~$f^*$, and~$v\in\HH^1 _0(S_{(n_1,\cdot)}(\Omega))$ is the unique weak solution of
\begin{equation*}
\arraycolsep=2pt
\left\{
\begin{array}{rcll}
  v -D\Delta v & = & S_{(n_1,\cdot)}(f^*)  & \text{ in } S_{(n_1,\cdot)}(\Omega) , \\
v & = & 0  & \text{ on } \partial{S_{(n_1,\cdot)}(\Omega)},
\end{array}
\right.
\end{equation*}
where $S_{(n_1,\cdot)}(\Omega)=\mathrm{B}_{n_1}(0,r_1)\times\Omega_{n_2}$ is the $(n_1,\cdot)$-Steiner symmetrization of $\Omega$ (see Example~\ref{steinercart}) and where 
$$
S_{(n_1,\cdot)}(f^*)=\chi_{S_{(n_1,\cdot)}(\mathrm{E})} + ((1-\mathrm{e}^{-\alpha T})/\alpha T)\chi_{S_{(n_1,\cdot)}(\Omega)\backslash S_{(n_1,\cdot)}(\mathrm{E})},
$$
is the $(n_1,\cdot)$-Steiner symmetrization of $f^*$ (see Example~\ref{indicSteiner}). Furthermore, $| S_{(n_1,\cdot)}(\mathrm{E})|=|\mathrm{E}|$ and~$S_{(n_1,\cdot)}(\Omega)=\Omega$, thus~$S_{(n_1,\cdot)}(f^*)\in\mathcal{F}$ and $v=Z_{S_{(n_1,\cdot)}(f^*)}$. Hence,
$$
\int_{\Omega_{n_2}}\int_{\mathrm{B}_{n_1}(0,r_1)} Z_{f^*} (x,y) \mathrm{d}x \mathrm{d}y\\= \int_{\Omega_{n_2}}\int_{\mathrm{B}_{n_1}(0,r_1)} Z_{S_{(n_1,\cdot)}(f^*)} (x,y) \mathrm{d}x \mathrm{d}y,
$$
and, since the solution of the control problem~\eqref{relaxpb} is unique, one concludes that $f^*=S_{(n_1,\cdot)}(f^*)$ and that $u^*=\chi_{S_{(n_1,\cdot)}(\mathrm{E})}\in\mathrm{Extr}(\mathcal{V}_{ad})$ is the unique solution of the optimal control problem~\eqref{optimalcontrol}. The symmetry of $S_{(n_1,\cdot)}(\mathrm{E})$ with respect to the hyperplane $x_i=0$ and the convexity in the $x_i$-direction for all $i\in\{1,...,n_1\}$, follow from Proposition~\ref{propstei}.
\end{proof}

\begin{proof}[Proof of item~\ref{thirditem}]
In the proof of item~\ref{seconditem}, one constructs 
$$
S_{(n_1,\cdot)}(f^*)=\chi_{S_{(n_1,\cdot)}(\mathrm{E})} +\frac{1-\mathrm{e}^{-\alpha T}}{\alpha T}\chi_{\Omega\backslash S_{(n_1,\cdot)}(\mathrm{E})},
$$ the unique solution of the control problem~\eqref{relaxpb}. Therefore, by considering $S_{(n_2,\cdot)}(\Phi(\Omega))$ and $Z_{S_{(n_1,\cdot)}(f^*)}\circ \Phi^{-1}$, where $\Phi: (x,y)\in\R^{n_1}\times\R^{n_2}\mapsto(y,x)\in\R^{n_2}\times\R^{n_1}$, it follows from Theorem~\ref{theoSt} that
$$
\int_{\mathrm{B}_{n_2}(0,r_2)} Z_{S_{(n_1,\cdot)}(f^*)}(x,y) \mathrm{d}y\leq \int_{\mathrm{B}_{n_2}(0,r_2)} v(x,y) \mathrm{d} y,
$$
for almost all $x\in\mathrm{B}_{n_1}(0,r_1)$, where $v\in\HH_0^1(S_{(\cdot,n_2)}(\Omega))$ is the unique weak solution of \begin{equation*}
\arraycolsep=2pt
\left\{
\begin{array}{rcll}
  v -D\Delta v & = &  S_{(\cdot,n_2)}\left(S_{(n_1,\cdot)}(f^*)\right)  & \text{ in } S_{(\cdot,n_2)}(\Omega) , \\
v & = & 0  & \text{ on } \partial{S_{(\cdot,n_2)}(\Omega)},
\end{array}
\right.
\end{equation*}
where $S_{(\cdot,n_2)}(\Omega):=\Phi^{-1}\left(S_{(n_2,\cdot)}(\Phi(\Omega))\right)=\mathrm{B}_{n_1}(0,r_1)\times\mathrm{B}_{n_2}(0,r_2)$ is the $(\cdot,n_2)$-Steiner symmetrization of~$\Omega$, and~$S_{(\cdot,n_2)}\left(S_{(n_1,\cdot)}(f^*)\right):=S_{(n_2,\cdot)}\left(S_{(n_1,\cdot)}(f^*)\circ\Phi^{-1}\right)\circ\Phi$ is the $(\cdot,n_2)$-Steiner symmetrization of $S_{(n_1,\cdot)}(f^*)$. One deduces that
$$
S_{(\cdot,n_2)}\left(S_{(n_1,\cdot)}(f^*)\right)=\chi_{S_{(\cdot,n_2)}\left(S_{(n_1,\cdot)}(\mathrm{E})\right)}+\frac{1-\mathrm{e}^{-\alpha T}}{\alpha T}\chi_{S_{(\cdot,n_2)}(\Omega)\backslash S_{(\cdot,n_2)}\left(S_{(n_1,\cdot)}(\mathrm{E})\right)}.
$$
From Proposition~\ref{propstei}, one has $|S_{(\cdot,n_2)}\left(S_{(n_1,\cdot)}(\mathrm{E})\right)|=|S_{(n_1,\cdot)}(\mathrm{E})|=|\mathrm{E}|$ and, since $S_{(\cdot,n_2)}(\Omega)=\Omega$, then~$S_{(\cdot,n_2)}\left(S_{(n_1,\cdot)}(f^*)\right)\in\mathcal{F}$ and~$v=Z_{S_{(\cdot,n_2)}\left(S_{(n_1,\cdot)}(f^*)\right)}$. Thus, one can conclude that $u^*=\chi_{S_{(\cdot,n_2)}\left(S_{(n_1,\cdot)}(\mathrm{E})\right)}\in\mathrm{Extr}(\mathcal{V}_{ad})$ is the unique solution of the optimal control problem~\eqref{optimalcontrol}. From Proposition~\ref{properties} it follows that $S_{(\cdot,n_2)}\left(S_{(n_1,\cdot)}(\mathrm{E})\right)$ is symmetric with respect to the hyperplane $x_i=0$ and with respect to the hyperplane $y_j=0$, and it is a convex set in the $x_i$-direction and in the~$y_j$-direction, for all~$(i,j)\in\{1,...,n_1\}\times\{1,...,n_2\}$, and also star-shaped at $0\in S_{(\cdot,n_2)}\left(S_{(n_1,\cdot)}(\mathrm{E})\right)$.
\end{proof}

\section{Additional results and remarks}\label{addrem}
In this section some additional results and remarks are given. Among domains with the same measure, it is possible to determine on which domain and for which bang-bang control the cost~$\mathcal{J}$ is the lowest.

\begin{myProp}\label{worstfield}
 Let $\mathrm{A}$ be the annulus $\mathrm{A}:=\Omega^{\#}\textbackslash \mathrm{B}_{n}(0,(\frac{|\Omega|-L}{\omega_n})^{\frac{1}{n}})$. Then,
$$
\inf\limits_{ \substack{ u\in \mathrm{Extr}(\mathcal{V}_{ad}) \\  } } \; \mathcal{J}(u) \geq \int_{0}^T \int_{\Omega^ {\#}} p^{\Omega^{\#}}_{\chi_{\mathrm{A}}} (x,t)\mathrm{d}x \mathrm{d}t,
$$
where $p^{\Omega^{\#}}_{\chi_{\mathrm{A}}}\in\LL^2(]0,T[,\HH^1(\Omega^{\#}))$ is the unique weak solution of the protection problem~\eqref{modele} defined on $\Omega^{\#}\times]0,T[$ for the control $u=\chi_{\mathrm{A}}$.
\end{myProp}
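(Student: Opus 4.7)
The plan is to recast $\mathcal{J}(u)$ as the $\LL^1$-integral of the solution to a linear elliptic Dirichlet problem with a \emph{non-negative} right-hand side whose support has fixed measure $|\Omega|-L$, and then apply Talenti's Schwarz comparison in the direction that yields a lower bound on $\mathcal{J}$.

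First, I would reuse the notation $P_u = \frac{1}{T}\int_0^T p_u(\cdot,t)\,\mathrm{d}t$ from the proof in Section~\ref{mainproof} and introduce the new unknown $\tilde P_u := 1 - P_u \in \HH^1_0(\Omega)$. A short computation shows that $\tilde P_u$ solves $\tilde P_u - D\Delta \tilde P_u = 1 - \varphi(u)$ with zero Dirichlet data, and that for $u = \chi_E \in \mathrm{Extr}(\mathcal{V}_{ad})$ the right-hand side simplifies to $\beta\,\chi_{\Omega \setminus E}$, with $\beta := 1-\frac{1-\mathrm{e}^{-\alpha T}}{\alpha T} > 0$ and $|\Omega\setminus E|=|\Omega|-L$. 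Rewriting $\mathcal{J}(u) = T|\Omega| - T\int_\Omega \tilde P_u\,\mathrm{d}x$, the problem of bounding $\mathcal{J}(u)$ from below becomes that of bounding $\int_\Omega \tilde P_u\,\mathrm{d}x$ from above, uniformly in $u \in \mathrm{Extr}(\mathcal{V}_{ad})$.

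The second step is Talenti's comparison (Theorem~\ref{theoAlv}) applied to the operator $\I - D\Delta$ with homogeneous Dirichlet data. Since $\beta\,\chi_{\Omega\setminus E}$ is non-negative with support of measure $|\Omega|-L$, its Schwarz symmetrization is exactly $\beta\,\chi_B$ with $B := \mathrm{B}_n(0,((|\Omega|-L)/\omega_n)^{1/n})$. The comparison then gives $\int_\Omega \tilde P_u\,\mathrm{d}x \leq \int_{\Omega^\#} v\,\mathrm{d}x$, where $v \in \HH^1_0(\Omega^\#)$ solves $v - D\Delta v = \beta\,\chi_B$. Observing that the control $\chi_A$ on $\Omega^\#$, with $A = \Omega^\#\setminus B$, produces precisely this source yields $v = 1 - P^{\Omega^\#}_{\chi_A}$; undoing the substitution then gives the announced bound $\mathcal{J}(u) \geq \int_0^T\int_{\Omega^\#} p^{\Omega^\#}_{\chi_A}\,\mathrm{d}x\,\mathrm{d}t$ for every $u \in \mathrm{Extr}(\mathcal{V}_{ad})$, and taking the infimum on the left concludes.

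The only genuine difficulty is conceptual rather than technical: one must spot that the right object to symmetrize is the \emph{complement} $\Omega \setminus \{u=1\}$ rather than $\{u=1\}$ itself. Without switching to the auxiliary variable $\tilde P_u$, Talenti's theorem would point toward the \emph{maximizer} (a centered ball, as in item~\ref{firstitem}) instead of the minimizer, where the intervention is pushed against the boundary into the annulus $A$. Once this change of unknown is identified, every other ingredient -- explicit form of $\varphi$, the measure computation, and the identification of the rearrangement of a characteristic function -- is routine.
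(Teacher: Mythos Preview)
Your proposal is correct and follows essentially the same approach as the paper: you introduce the auxiliary variable $\tilde P_u = 1 - P_u$ (which the paper denotes $W_u$), identify its source term as $\beta\,\chi_{\Omega\setminus E}$ with $|\Omega\setminus E|=|\Omega|-L$, apply the Schwarz comparison Theorem~\ref{theoAlv}, and then recognize the symmetrized problem as the time-averaged protection problem on $\Omega^\#$ for the annular control $\chi_A$. The only cosmetic difference is that the paper splits the final identification into an intermediate function $v_1$ with non-homogeneous boundary data, whereas you undo the substitution directly; both are equivalent.
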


\begin{proof}
Let $u\in\mathrm{Extr}(\mathcal{V}_{ad})$ and $W_u:=1-\int_{0}^{T} p_u /T  \in \HH^1(\Omega)$ which is the unique weak solution~of
\begin{equation*}
\arraycolsep=2pt
\left\{
\begin{array}{rcll}
  W_u - D\Delta W_u & = &  \displaystyle 1-\frac{1}{T}\int_{0}^T \mathrm{e}^{-\alpha(1-u)t}\mathrm{d}t & \text{ in } \Omega , \\
W_u & = & 0  & \text{ on } \partial{\Omega}.
\end{array}
\right.
\end{equation*}
Since $u\in\mathrm{Extr}(\mathcal{V}_{ad})$, there exists $\mathrm{C}\subset\Omega$, $|\mathrm{C}|=L$ such that 
$$
1-\frac{1}{T}\int_{0}^T \mathrm{e}^{-\alpha(1-u)t}\mathrm{d}t=\left(1-\frac{1-\mathrm{e}^{-\alpha T}}{\alpha T}\right)\chi_{\Omega\textbackslash \mathrm{C}},
$$
\textit{a.e.} in $\Omega$.
Therefore, by Theorem~\ref{theoAlv}, one deduces that 
$
\int_{\Omega} W_u (x)\mathrm{d}x \leq \int_{\Omega^{\#}} v(x)\mathrm{d}x,
$
where~$v\in\HH^1 _0(\Omega^{\#})$ is the unique weak solution of
\begin{equation*}
\arraycolsep=2pt
\left\{
\begin{array}{rcll}
  v -D\Delta v & = & \left(1-\frac{1-\mathrm{e}^{-\alpha T}}{\alpha T}\right)\chi_{\left(\Omega\textbackslash \mathrm{C}\right)^{\#}}   & \text{ in } \Omega^{\#} , \\
v & = & 0  & \text{ on } \partial{\Omega^{\#}} ,
\end{array}
\right.
\end{equation*}
i.e., 
$$
\int_{\Omega} \mathrm{d}x - \frac{1}{T}\int_{\Omega} \int_0 ^T p_u (x,t)\mathrm{d}x \mathrm{d}t\leq \int_{\Omega^{\#}}  \mathrm{d}x - \int_{\Omega^{\#}} v_1 (x) \mathrm{d}x,
$$
where~$v_1\in\HH^1 (\Omega^{\#})$ is the unique weak solution of
\begin{equation*}
\arraycolsep=2pt
\left\{
\begin{array}{rcll}
  v_1 -D\Delta v_1 & = & \frac{1-\mathrm{e}^{-\alpha T}}{\alpha T}\chi_{\left(\Omega\textbackslash \mathrm{C}\right)^{\#}} + \chi_{\Omega^{\#}\textbackslash\left(\Omega\textbackslash \mathrm{C}\right)^{\#}}   & \text{ in } \Omega^{\#} , \\
v_1 & = & 1  & \text{ on } \partial{\Omega^{\#}},
\end{array}
\right.
\end{equation*}
and where $\Omega^{\#}\textbackslash\left(\Omega\textbackslash \mathrm{C}\right)^{\#}=\Omega^{\#}\textbackslash \mathrm{B}_{n}(0,(\frac{|\Omega|-L}{\omega_n})^{\frac{1}{n}})=:\mathrm{A}$ since $|\Omega\textbackslash\mathrm{C}|=|\Omega|-L$. Moreover, 
$$
\frac{1-\mathrm{e}^{-\alpha T}}{\alpha T}\chi_{\left(\Omega\textbackslash \mathrm{C}\right)^{\#}} + \chi_{\Omega^{\#}\textbackslash\left(\Omega\textbackslash \mathrm{C}\right)^{\#}}=\frac{1}{T}\int_{0}^T \mathrm{e}^{-\alpha(1-\chi_{\mathrm{A}})t}\mathrm{d}t,
$$
\textit{a.e.} in $\Omega^{\#}$.
Thus, $v_1=\int_0 ^T p^{\Omega^{\#}} _{\chi_{\mathrm{A}}} / T \in \HH^1(\Omega^{\#})$ which concludes the proof.
\end{proof}

\begin{myRem}\normalfont
Note that, if we consider a homogeneous Dirichlet boundary condition in the protection problem~\eqref{modele}, one can determine similarly to Proposition~\ref{worstfield}, on which domain and for which control the cost $\mathcal{J}$ is the highest, i.e.,
$$
\max\limits_{ \substack{ u\in \mathcal{U}_{ad} \\  } } \; \mathcal{J}(u) \leq \int_{0}^T  \int_{\Omega^{\#}} p^{\Omega^{\#}}_{\chi_{\mathrm{B}_{n}\left(0,\left(\frac{L}{\omega_n}\right)^{\frac{1}{n}}\right)}}\mathrm{d}x \mathrm{d}t,
$$
where $p^{\Omega^{\#}}_{\chi_{\mathrm{B}_{n}\left(0,\left(\frac{L}{\omega_n}\right)^{\frac{1}{n}}\right)}}\in\LL^2(]0,T[,\HH^1 _0(\Omega^{\#}))$ is the unique weak solution of the protection problem~\eqref{modele} defined on~$\Omega^{\#}\times]0,T[$ with a homogeneous Dirichlet boundary condition and for the control $u=\chi_{\mathrm{B}_{n}\left(0,\left(\frac{L}{\omega_n}\right)^{\frac{1}{n}}\right)}$ (see~\cite{VOAS} for a similar result on an optimal control problem where the control acts on the divergence operator).
\end{myRem}

\begin{myRem}\normalfont
If one does not restrict Proposition~\ref{worstfield} to bang-bang controls, then one can prove in the same way that,
$$
\min\limits_{ \substack{ u\in \mathcal{U}_{ad} \\  } } \; \mathcal{J}(u)= \mathcal{J}(0)\geq \int_{0}^T \int_{\Omega^ {\#}} p^{\Omega^{\#}}_{0} (x,t)\mathrm{d}x \mathrm{d}t,
$$
where $p^{\Omega^{\#}}_{0}$ is the unique weak solution of the protection problem~\eqref{modele} defined on $\Omega^{\#}\times]0,T[$ for the null control~$u=0$ \textit{a.e.} in $\Omega$.
\end{myRem}

We conclude this section with the following remark.

\begin{myRem}\normalfont
By integrating on~$\Omega\times]0,T[$ the protection problem~\eqref{modele} and using Green's formula, we obtain
$$
\int_0 ^T \int_{\Omega} p_{u} (x,t) \mathrm{d}x \mathrm{d}t= \int_0 ^T \int_{\Omega} \mathrm{e}^{-\alpha\left(1-u(x)\right)t}\mathrm{d}x \mathrm{d}t+D\int_0 ^T \int_{\partial\Omega} \partial_{\nn} p_u (x,t)\mathrm{d}x \mathrm{d}t,
$$
for all $u\in\mathrm{Extr}(\mathcal{V}_{ad})$. Since 
$$
\int_0 ^T \int_{\Omega} \mathrm{e}^{-\alpha\left(1-u(x)\right)t} \mathrm{d}x \mathrm{d}t=T\int_{\{u=1\}}\mathrm{d}x+\int_{0}^T \int_{\{u=0\}}\mathrm{e}^{-\alpha t} \mathrm{d}x \mathrm{d}t=TL+\left(|\Omega|-L\right)\frac{1-\mathrm{e}^{-\alpha T}}{\alpha},
$$
for all $u\in\mathrm{Extr}(\mathcal{V}_{ad})$, then the optimal control problem~\eqref{optimalcontrol} is equivalent to maximizing the functional $\int_0 ^T \int_{\partial\Omega} \partial_{\nn} p_u (x,t)\mathrm{d}x \mathrm{d}t$ on $\mathrm{Extr}(\mathcal{V}_{ad})$. In particular, if the Dirichlet boundary condition is replaced by a Neumann boundary condition in the protection problem~\eqref{modele}, then all bang-bang controls are solutions of the optimal control problem~\eqref{optimalcontrol}. Indeed, if $\partial_{\nn} p _u =g$ on $\partial\Omega\times]0,T[$, where $g\in\LL^2(]0,T[,\HH^{-1/2}(\Omega))$, then we can still prove that $\mathcal{J}$ attains its maximum on the set of bang-bang controls and, moreover,
$$
\int_0 ^T \int_{\Omega} p_{u}(x,t) \mathrm{d}x \mathrm{d}t = D, \qquad \forall u\in\mathrm{Extr}(\mathcal{V}_{ad}),
$$
where $D>0$ is a constant independant of $u\in\mathrm{Extr}(\mathcal{V}_{ad})$.
\end{myRem}

\section{Numerical simulations}\label{num}

In this section we numerically solve an example of the optimal control problem~\eqref{optimalcontrol} in the two-dimensional case $n=2$, in order to illustrate Theorem~\ref{maintheo}. The numerical simulations have been performed using Freefem++ software~\cite{HECHT} with P1-finite elements and standard affine mesh.

\subsection{Numerical methodology}

Starting from an initial data $u_0\in\mathcal{U}_{ad}$,
note that Proposition~\ref{differentielle} allows to obtain an ascent direction~$h_d (u_0)$ of the cost functional~$\mathcal{J}$ at~$u_0$, given by~$h_d (u_0):=qw\circ u_0\in\LL^{\infty}(\Omega)$, since it satisfies~$\mathrm{d}\mathcal{J}(u_0)(h_d (u_0))=\|qw\circ u_0
\|^2_{\LL^2(\Omega)}\geq 0$. To deal with the inequality constraint $\int_{\Omega}u(x)\mathrm{d}x\leq L$, the Uzawa algorithm (see, e.g.,~\cite[Chapter 3 p.64]{ALL}) is used, while the bound constraints are dealt with the projected gradient. In a nutshell, we consider the augmented functional $\mathcal{J}- \lambda_0 F$, where $F : u\in\LL^{\infty}(\Omega)\mapsto \int_\Omega u -L\in\R$ and~$\lambda_0\in\R^+$ is an initial Lagrange multiplier. Since the Fréchet differential of $F$ at $u_0$ is the map~$h\in\LL^{\infty}(\Omega)\mapsto \lambda_0\int_\Omega h \in\R$, one can obtain an ascent direction of the augmented functional at~$u_0$ given by $h_d (u_0)-\lambda_0$. Then the new control is given by
$$
u_1=\mathrm{proj}_{\mathcal{V}}\left(u_0+\eta \left(h_d (u_0)-\lambda_0\right)\right),
$$
where $\eta>0$ is a fixed parameter and $\mathrm{proj}_{\mathcal{V}}$ is the projection operator onto $\mathcal{V}:=\{u\in\LL^{\infty}(\Omega) \mid 0\leq u \leq 1 \text{ \textit{a.e.} in } \Omega \}$ considered in $\LL^2(\Omega)$.
Finally, the Lagrange multiplier is updated as follows
$$
\lambda_{1}:=\lambda_{0}+\mu F(u_1),
$$
where $\mu>0$ is a fixed parameter, and the algorithm restarts with~$u_1$ and~$\lambda_{1}$, and so on. To conclude, note that the algorithm stops when, for all~$i\in~\mathbb{N}^{*}$, the difference between the cost functional $\mathcal{J}$ at the iteration~$20\times i$ and at the iteration~$20\times (i-1)$ is small enough.

\subsection{Numerical results}

For numerical simulations, we consider a diffusion coefficient~$D=0.01$ and a death rate~$\alpha=1$. In the following, the domains considered have the same measure~$|\Omega|=4$ and the maximal surface of the intervention zones is $L=|\Omega|/4=0.4$. 

In Figure~\ref{fig3} (resp. Figure~\ref{fig4}), the numerical simulation is performed on the domain $\Omega_1=]-1,1[\times]0,-1[\cup]0,1[\times]0,2[$ (resp. $\Omega_2=]-3c,3c[\times]-c,c[ \cup ]-c,c[\times]c,3c[ \cup ]-c,c[\times]-3c,-c[ $ for $c=1/\sqrt{5}$). Figure (1a) (resp. (2a)) shows the optimal control, (1b) (resp. (2b)) the optimal protection at the final time~$T$. One observes that the optimal control takes exclusively the two values~$0$ (in orange) and $1$ (in red) in the domain and thus it is a bang-bang control which is in accordance with Theorem~\ref{maintheo}. Furthermore, one sees that the zone of intervention is connected, and in the case of the symmetric domain $\Omega_2$, it is also symmetric. (1c) (resp. (2c)) shows the evolution of the value of $\mathcal{J}$ and (1d) (resp. (2d)) the surface of the intervention zone with respect to the iterations.  We observe that $\mathcal{J}$ seems to converge with some oscillations due to the Lagrange multiplier in order to satisfy the surface constraint.

In Figure~\ref{figg1}, a circular domain is considered $\Omega_3=\mathrm{B}_2 (0,2/\sqrt{\pi})$. As in Figure~\ref{fig3} and Figure~\ref{fig4}, one observes that the optimal control is bang–bang. The zone of intervention is a disk concentrated in the center of the domain with a surface close to $0.4$ as stated in item~\ref{firstitem} of Theorem~\ref{maintheo}. 

In Figure~\ref{figg2}, we consider a rectangular domain $\Omega_4=]-2,2[\times]-\frac{1}{2},\frac{1}{2}[$. As in previous figures, the optimal control is bang–bang. Moreover the zone of intervention is symmetric with respect to the axis $(Ox)$ and $(Oy)$, convex in the $x$-direction and in the $y$-direction and also star-shaped at $0$ as expected from item~\ref{thirditem} of Theorem~\ref{maintheo}.

In Figure~\ref{fig5}, in (5a) are summarized the values of the cost functional at the optimal control on their respective domains. The circular domain and the annular intervention zone described in Proposition~\ref{worstfield} are computed in (5b), with the value of $\mathcal{J}$ who is lower than in the other numerical simulations, as claimed.

\section*{Acknowledgement}
The authors are grateful to Greenshield Company for suggesting this study and to Idriss Mazari-Fouquer for fruitful discussions and valuable advice. The authors were funded by the MATAE project.

\begin{figure}[h!]
    \centering
    \renewcommand\thesubfigure{1\alph{subfigure}}
    \subfloat[]{\includegraphics[scale=0.24, trim = 0.cm 0.1cm 1cm 0.5cm, clip]{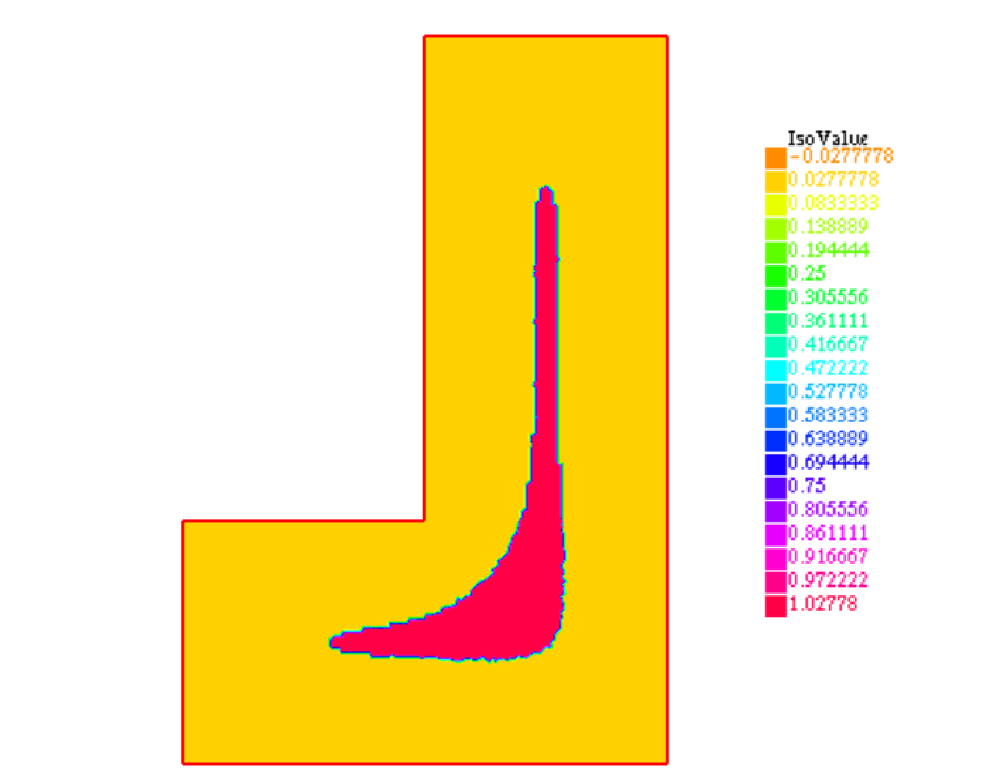}}
    \subfloat[]{\includegraphics[scale=0.24, trim = 0.cm 0.cm 1cm 0.5cm, clip]{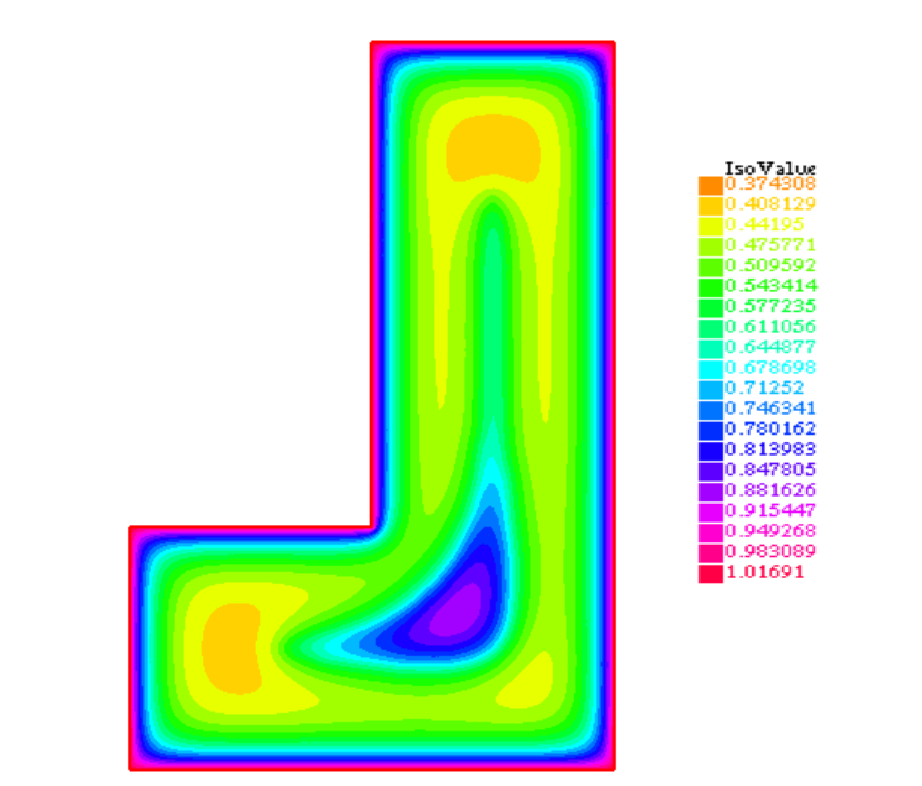}}
    
    \subfloat[]{\includegraphics[scale=0.5]{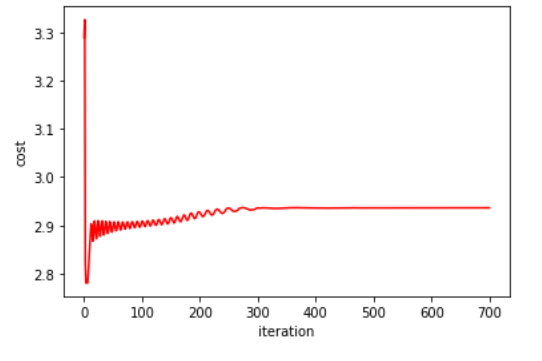}}
    \subfloat[]{\includegraphics[scale=0.5]{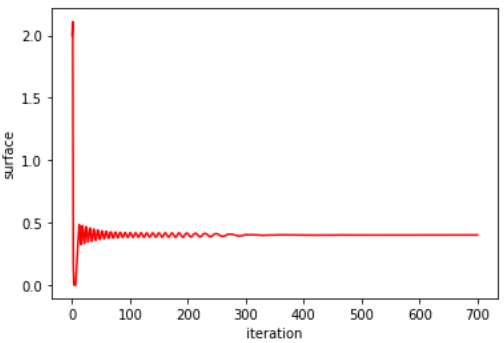}}
    
    \caption{\small{(1a) The optimal control on $\Omega_1$. (1b) The protection for the optimal control at the final time~$T$. The values of the cost functional (1c) and the surface of the intervention zone (1d) with respect to the iterations.}}\label{fig3}
\end{figure}

\begin{figure}[h!]
    \centering
    \renewcommand\thesubfigure{2\alph{subfigure}}
    \subfloat[]{\includegraphics[scale=0.26, trim = 0.cm 0.1cm 1cm 0.5cm, clip]{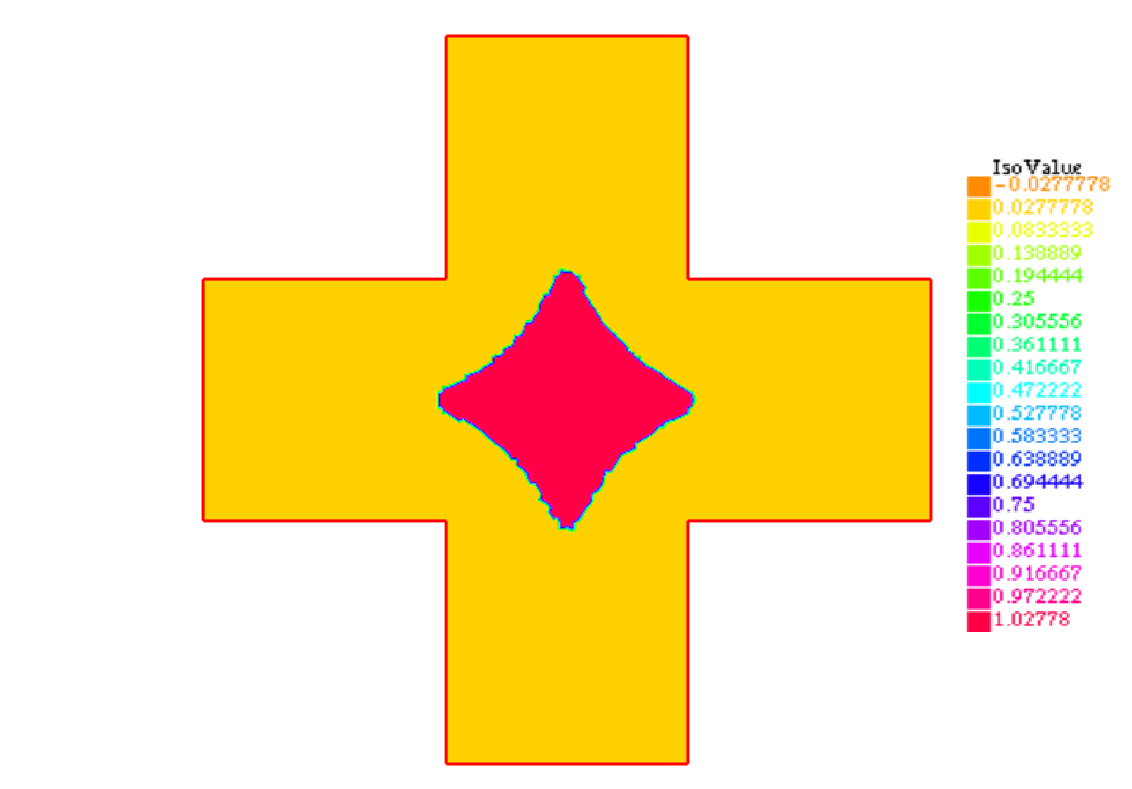}}
    \subfloat[]{\includegraphics[scale=0.26, trim = 0.cm 0.cm 1cm 0.5cm, clip]{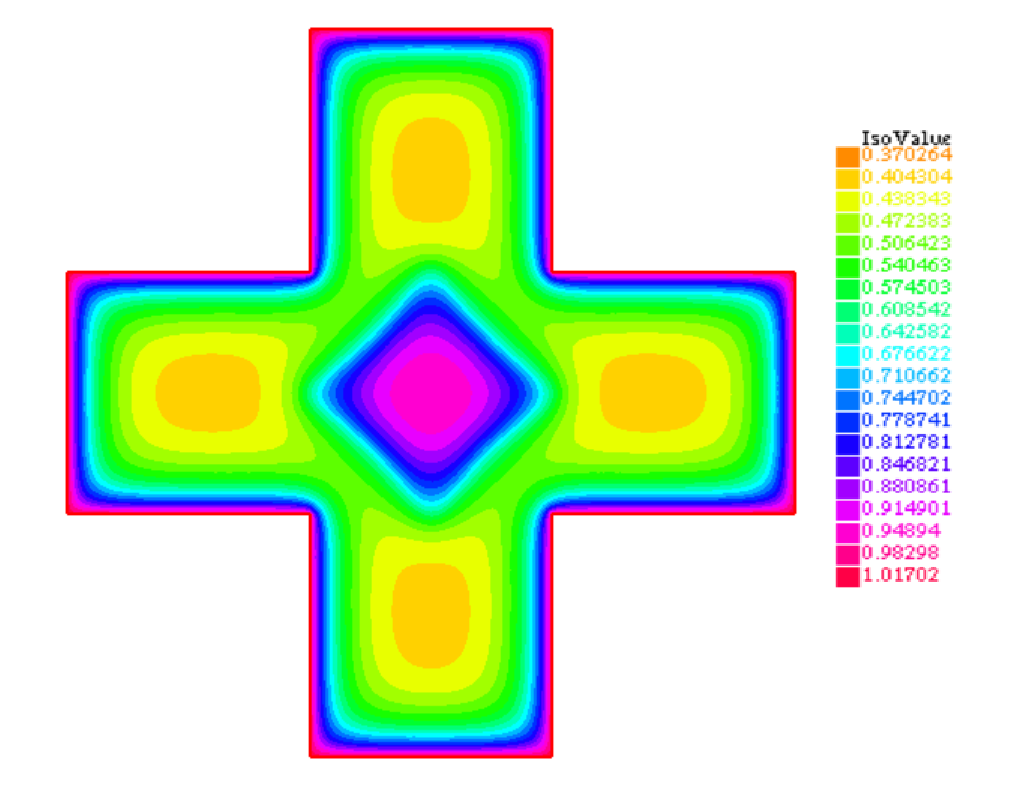}}
    
    \subfloat[]{\includegraphics[scale=0.52]{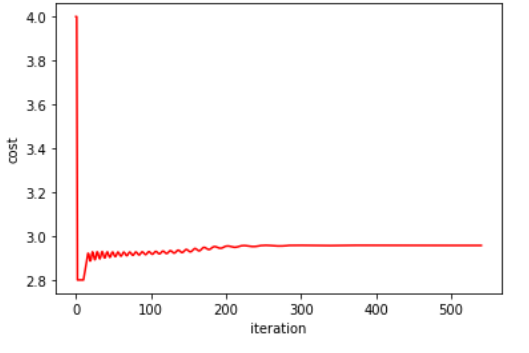}}
    \subfloat[]{\includegraphics[scale=0.52]{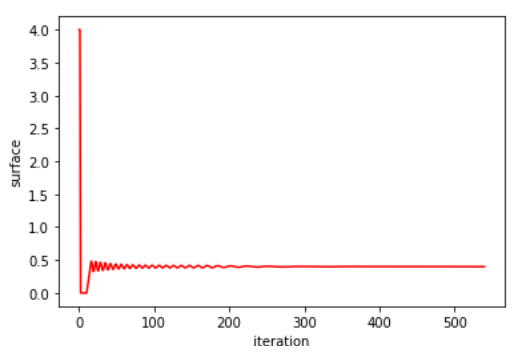}}
    
    \caption{\small{(2a) The optimal control on $\Omega_2$. (2b) The protection for the optimal control at the final time~$T$. The values of the cost functional (2c) and the surface of the intervention zone (2d) with respect to the iterations.}}\label{fig4}
\end{figure}

\begin{figure}[h!]
    \centering
    \renewcommand\thesubfigure{3\alph{subfigure}}
    \subfloat[]{\includegraphics[scale=0.26, trim = 0.cm 0.1cm 1cm 0.5cm, clip]{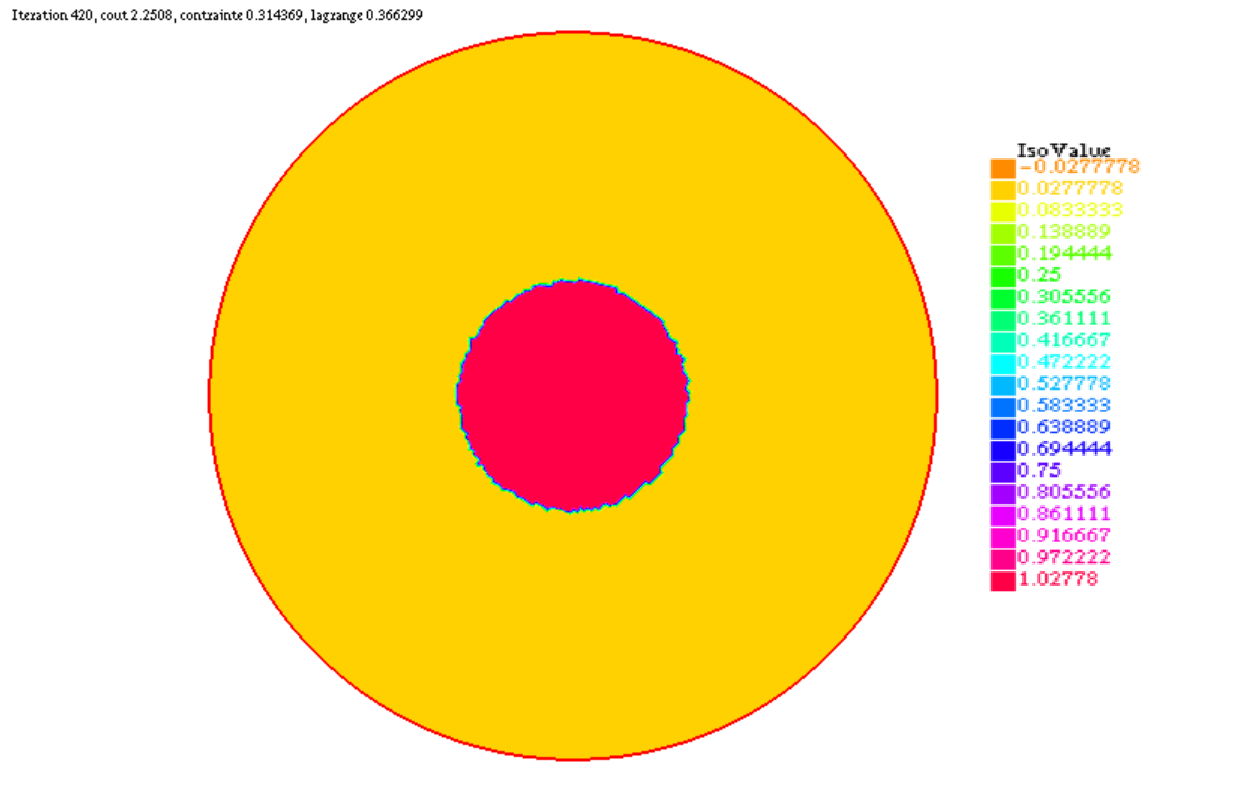}}
    \subfloat[]{\includegraphics[scale=0.26, trim = 0.cm 0.cm 1cm 0.5cm, clip]{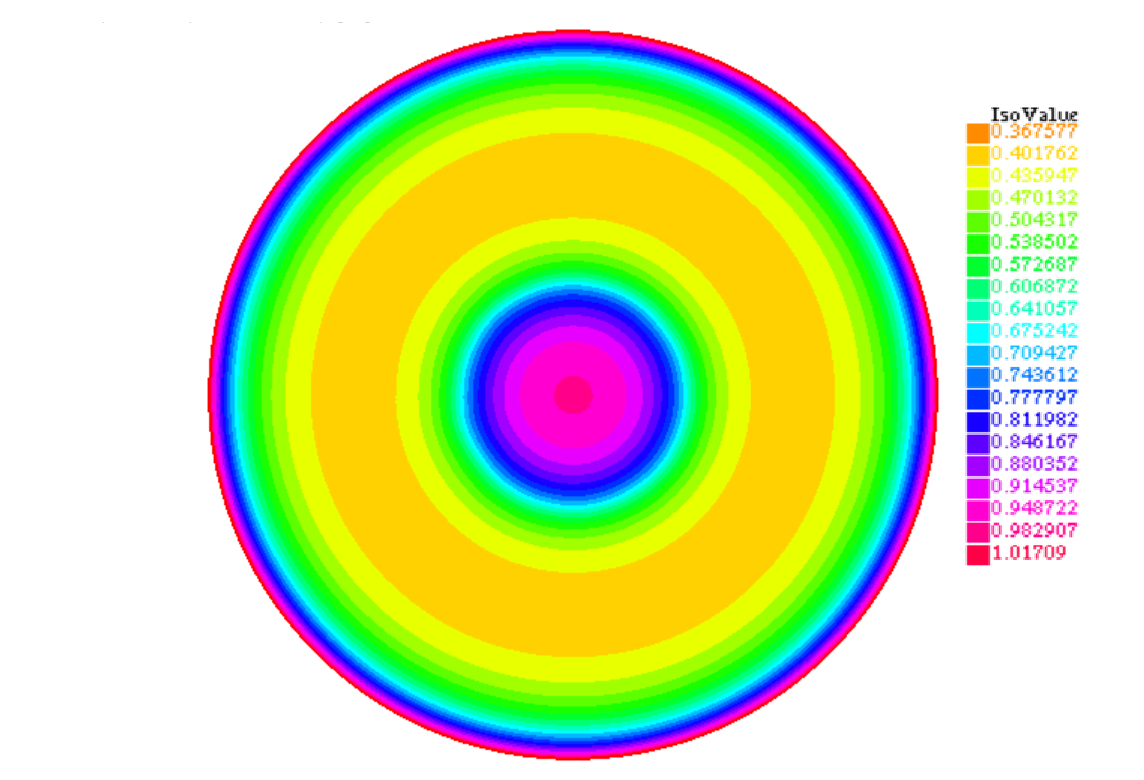}}
    
    \subfloat[]{\includegraphics[scale=0.52]{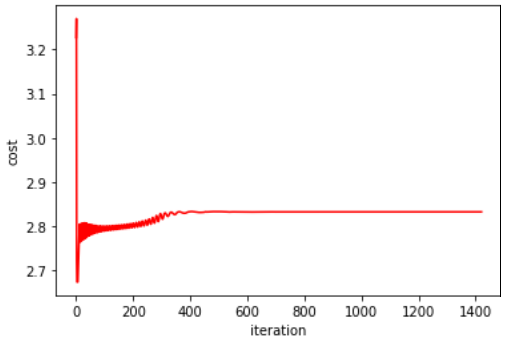}}
    \subfloat[]{\includegraphics[scale=0.52]{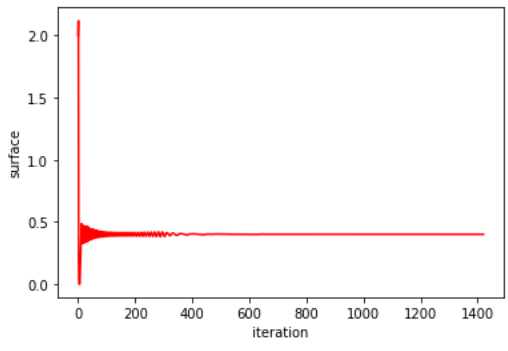}}
    
    \caption{\small{(3a) The optimal control on the circular domain $\Omega_3$. (3b) The protection for the optimal control at the final time $T$. The values of the cost functional (3c) and the surface of the intervention zone (3d) with respect to the iterations.}}\label{figg1}
\end{figure}

\begin{figure}[h!]
    \centering
    \renewcommand\thesubfigure{4\alph{subfigure}}
    \subfloat[]{\includegraphics[scale=0.25, trim = 0.cm 0.1cm 0.cm 0.3cm, clip]{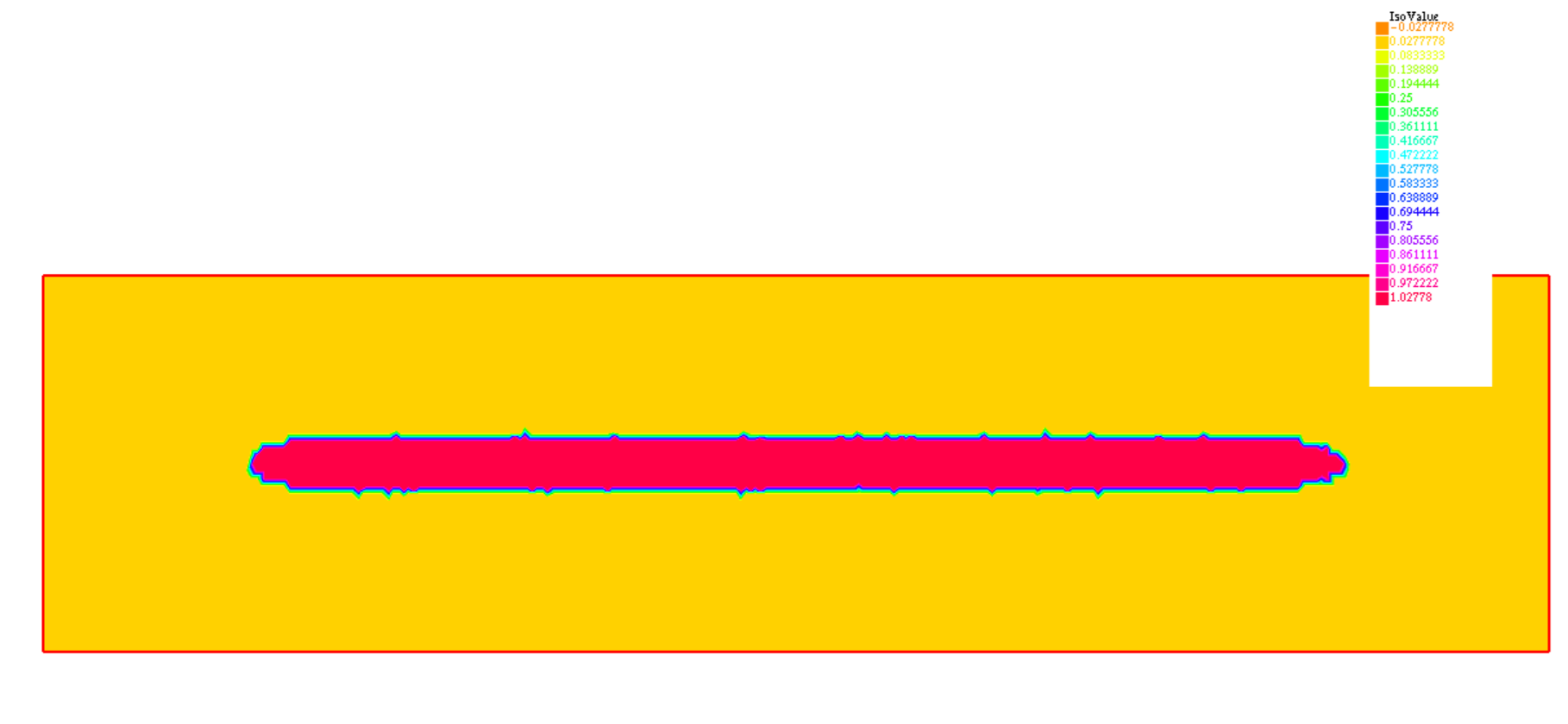}}
     \subfloat[]{\includegraphics[scale=0.25, trim = 0.cm 0.1cm 0cm 0.3cm, clip]{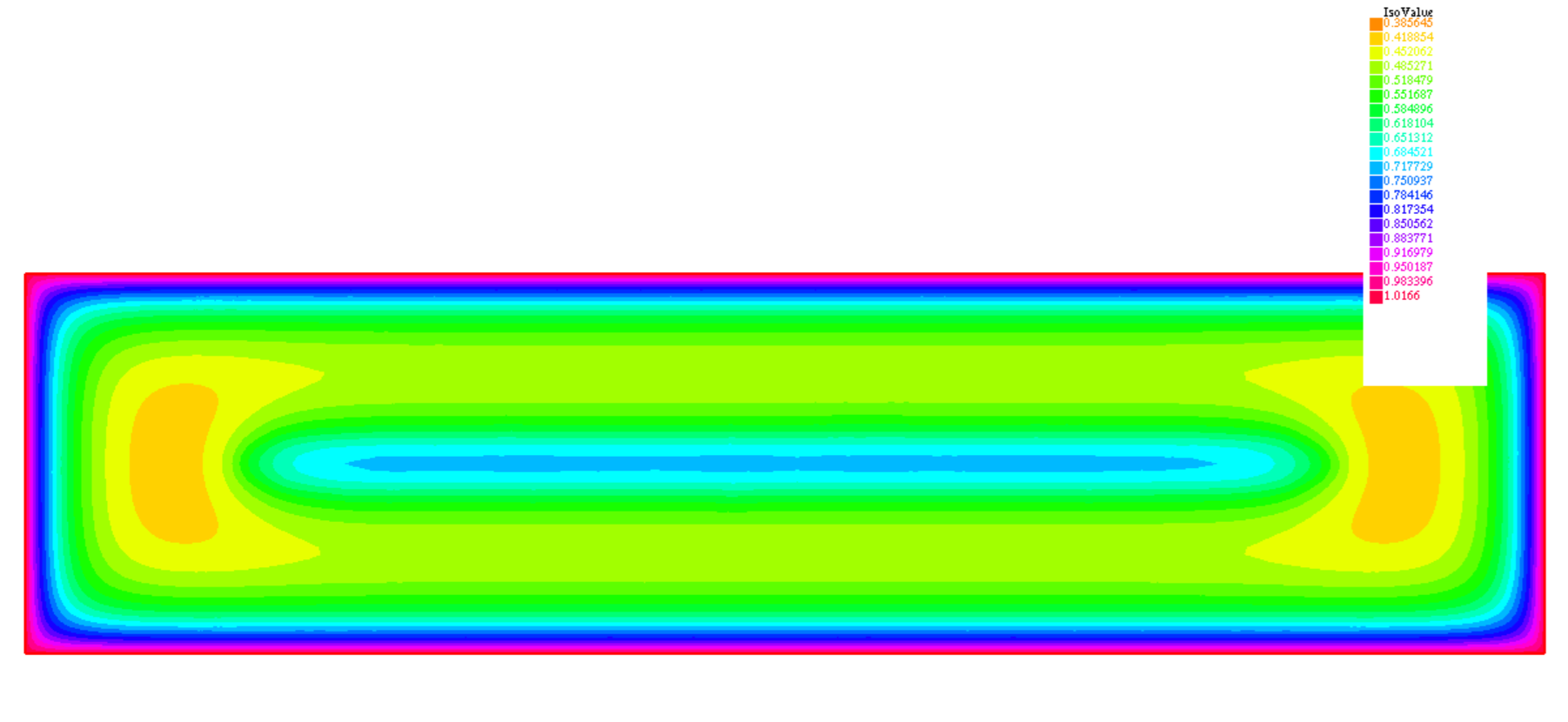}}
 
     \subfloat[]{\includegraphics[scale=0.5]{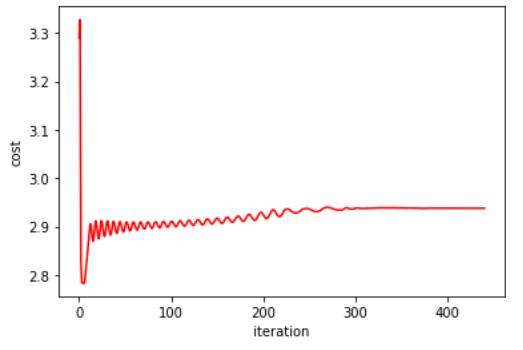}}
     \subfloat[]{\includegraphics[scale=0.5]{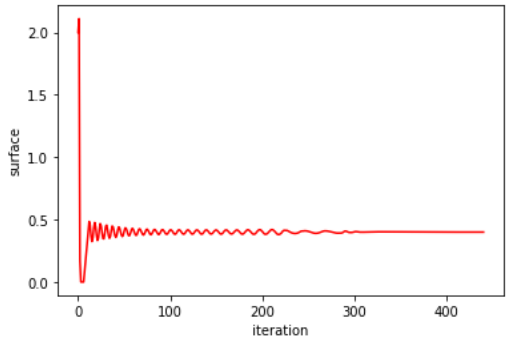}}
    
     \caption{\small{(4a) The optimal control on the rectangular domain $\Omega_4$. (4b) The protection for the optimal control at the final time $T$. The values of the cost functional (4c) and the surface of the intervention zone (4d) with respect to the iterations.}}\label{figg2}
\end{figure}

\begin{figure}[!ht]
    \centering
    \renewcommand\thesubfigure{5\alph{subfigure}}
    \subfloat[]{\begin{tabular}[b]{|l|c|c|c|c|}
    \hline
    The domain  & $\Omega_1$ & $\Omega_2$ & $\Omega_3$ & $ \Omega_4$    \\
    \hline 
    value of $\mathcal{J}$ at the optimal control & 2.93643  & 2.95918 & 2.83285 & 2.93773   \\
    \hline
    Surface of the intervention zone &  0.400416 & 0.399502 & 0.399777 & 0.399541 \\
     \hline
    \end{tabular}}
    
    \subfloat[]{\includegraphics[scale=0.5]{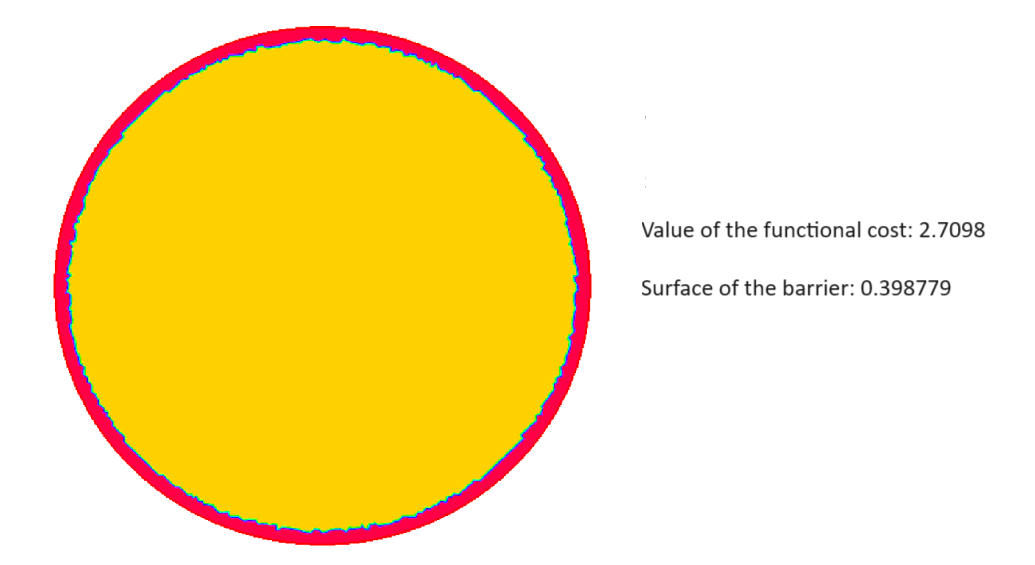}}
    
    \caption{\small{(5a) The value of $\mathcal{J}$ at the optimal control and the surface of the intervention zone on the different domains considered. (5b) A circular domain with an annular intervention zone  and the value of $\mathcal{J}$ for this control.}}
    \label{fig5}
\end{figure}

\appendix

\section{Notions and results on Schwarz and Steiner symmetrizations}\label{remind}

The aim of this section is to recall and to prove some results on the Schwarz and Steiner symmetrizations. For more details we refer to standard references such as~\cite{Baernstein,Burchard,Henrot,KAW,KESAVAN}.

\subsection{Schwarz symmetrization}\label{Schwarz}

\begin{myDefn}[Symmetric rearrangement]
Let~$\mathrm{C}$ be a measurable subset of~$\R^{n}$ such that $|\mathrm{C}|<\infty$. The symmetric rearrangement of $\mathrm{C}$ is the subset $\mathrm{C}^{\#}\subset\R^n$  defined by
$$
\mathrm{C}^{\#}:=\mathrm{B}_n\left(0,\left(\frac{|\mathrm{C}|}{{\omega_n}}\right)^{\frac{1}{n}}\right),
$$
where
$\omega_n:=|\mathrm{B}_n(0,1)|$.
\end{myDefn}

\begin{myDefn}[Schwarz symmetrization]
Let $\mathrm{C}$ be a measurable subset of~$\R^n$ such that $|\mathrm{C}|<\infty$, and let~$f : \mathrm{C}\rightarrow \mathbb{R}$ be a nonnegative measurable function. The Schwarz symmetrization of $f$ is the function~$f^{\#}$ defined by
$$
f^{\#}(x):=\int_{0}^{+\infty} \chi_{{\left\{f>t\right\}}^{\#}}(x)\,\mathrm{d}t,
$$
for all $x\in\mathrm{C}^\#$.
\end{myDefn}

\begin{myExa}\label{indicSch}
Let $\Omega$ be a nonempty bounded open subset $\R^n$ and let $\mathrm{C}$ be a measurable subset of $\Omega$. Consider the map $f : x\in\Omega \mapsto \alpha\chi_{\mathrm{C}}(x)+\beta\chi_{\Omega\backslash\mathrm{C}} (x) \in\R^+$, where~$\alpha,\beta$ are constant such that~$\alpha\geq\beta\geq0$, and $\chi_{\mathrm{C}}$ (resp. $\chi_{\Omega\backslash\mathrm{C}}$) is the characteristic function of $\mathrm{C}$ (resp. of $\Omega\backslash\mathrm{C}$). Then the Schwarz symmetrization of $f$ is $f^\# : x\in\Omega^{\#}\mapsto\alpha\chi_{\mathrm{C}^\#} (x)+\beta\chi_{\Omega^{\#}\backslash\mathrm{C}^{\#}} (x)$.
\end{myExa}

The next comparison theorem for elliptic equations is proved in~\cite{ALVIN,ALVMATA,Chiti,TROMBV} or~\cite[Section 9]{Trombetti}), and it is one of the key points to derive item~\ref{firstitem} of Theorem~\ref{maintheo}. 

\begin{myTheorem}\label{theoAlv}
Let us consider $\Omega$ a nonempty bounded connected open subset of $\R^{n}$, with a lipschitz boundary $\partial{\Omega}$, $f\in\LL^2(\Omega)$ a nonnegative function,~$D>0$ a constant, $Z_f\in\HH^1 _0(\Omega)$ the unique weak solution of the Dirichlet problem,
\begin{equation*}
\arraycolsep=2pt
\left\{
\begin{array}{rcll}
 Z_f -D\Delta Z_f & = & f   & \text{ in } \Omega , \\
Z_f & = & 0  & \text{ on } \partial{\Omega} ,
\end{array}
\right.
\end{equation*}
and $v\in\HH^1 _0(\Omega^{\#})$ the unique weak solution of the Dirichlet problem
\begin{equation*}
\arraycolsep=2pt
\left\{
\begin{array}{rcll}
  v -D\Delta v & = & f^{\#}   & \text{ in } \Omega^{\#} , \\
v & = & 0  & \text{ on } \partial{\Omega^{\#}} ,
\end{array}
\right.
\end{equation*}
where $\Omega^{\#}$ is the symmetric rearrangement of $\Omega$ and $f^{\#}$ is the Schwarz symmetrization of $f$. Then one has
$$
\int_{\Omega}Z_f(x)\mathrm{d}x\leq\int_{\Omega^{\#}} v(x)\mathrm{d}x.
$$
\end{myTheorem}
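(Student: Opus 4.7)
The plan is to adapt Talenti's classical symmetrization argument, in the form extended by Alvino-Trombetti to second-order elliptic operators that contain a zeroth-order term. The end goal is the pointwise bound $Z_f^*(s) \leq v^*(s)$ for every $s \in [0,|\Omega|]$ on the decreasing rearrangements: since $v$ is already radial and non-increasing, equimeasurability then yields $\int_\Omega Z_f = \int_0^{|\Omega|} Z_f^* \leq \int_0^{|\Omega^\#|} v^* = \int_{\Omega^\#} v$, which is the claim.

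First I would introduce the distribution function $\mu(t) := |\{Z_f > t\}|$ and take a regularization of $\chi_{\{Z_f > t\}}$ as test function in the weak formulation of the equation, passing to the limit to obtain, for a.e.\ $t > 0$, the identity $D \int_{\{Z_f = t\}} |\nabla Z_f| \, \mathrm{d}\mathcal{H}^{n-1} = \int_{\{Z_f > t\}} (f - Z_f) \, \mathrm{d}x$. The coarea formula gives $-\mu'(t) = \int_{\{Z_f = t\}} \mathrm{d}\mathcal{H}^{n-1}/|\nabla Z_f|$; combining Cauchy-Schwarz on the level set, the isoperimetric inequality $P(\{Z_f > t\}) \geq n \omega_n^{1/n} \mu(t)^{(n-1)/n}$, and the Hardy-Littlewood bounds $\int_{\{Z_f>t\}} f \leq \int_0^{\mu(t)} f^*$ and $\int_{\{Z_f>t\}} Z_f = \int_0^{\mu(t)} Z_f^*$, one obtains the differential inequality
$$ D\, n^2 \omega_n^{2/n} \mu(t)^{2(n-1)/n} \leq -\mu'(t) \left( \int_0^{\mu(t)} f^*(r) \, \mathrm{d}r - \int_0^{\mu(t)} Z_f^*(r) \, \mathrm{d}r \right). $$

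Running the very same chain on $v$ in $\Omega^\#$ produces the analogous relation but with equality throughout (both Cauchy-Schwarz and the isoperimetric inequality saturate on the radial level sets of $v$) and with $Z_f^*$ replaced by $v^*$. After the change of variables $s = \mu(t)$, these two statements become an ODE equality for $V(s) := \int_0^s v^*(r) \, \mathrm{d}r$ and a matching first-order differential inequality for $U(s) := \int_0^s Z_f^*(r) \, \mathrm{d}r$, both with initial data $U(0) = V(0) = 0$ and common forcing $\int_0^s f^*$. A one-dimensional comparison argument on $V - U$ then yields $U \leq V$ on $[0,|\Omega|]$, and evaluating at $s = |\Omega|$ closes the proof.

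The main obstacle is precisely this last comparison step: the extra $-\int_0^s Z_f^*$ term coming from the zeroth-order part of the operator makes the inequality for $U$ nonlocal in $U$ itself, so one cannot integrate against an explicit kernel as in the original Talenti case $-D\Delta Z_f = f$. The standard fix is to differentiate once more and recast the relation as a first-order linear inequality in $V - U$ with nonnegative coefficients and nonnegative source, then invoke a one-dimensional maximum principle. Keeping track of the behavior of $\mu$ on the set $\{|\nabla Z_f| = 0\}$, where it may fail to be strictly decreasing and the coarea identity must be handled with care, is the remaining technical subtlety.
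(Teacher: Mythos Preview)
The paper does not supply its own proof of this theorem. It is stated in Appendix~A.1 as a known comparison result and simply cites the literature (Alvino--Lions--Trombetti, Chiti, Trombetti--V\'azquez, and Trombetti's survey). Your sketch is precisely the argument one finds in those references: slice the solution on its level sets, combine the coarea formula with Cauchy--Schwarz and the isoperimetric inequality, bound the source term by Hardy--Littlewood, and finish with a one-dimensional comparison/maximum-principle step to absorb the zeroth-order contribution. So you are not diverging from the paper's approach; you are filling in a proof the paper deliberately omits, along the classical lines it points to.

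One small inconsistency worth tidying: your opening sentence declares the target to be the pointwise bound $Z_f^*(s)\le v^*(s)$, but the body of your argument (rightly) only establishes the concentration inequality $U(s)=\int_0^s Z_f^*\le \int_0^s v^*=V(s)$. With a nontrivial zeroth-order term this integrated comparison is what the Alvino--Trombetti machinery actually yields, and it is all you need here since evaluating at $s=|\Omega|$ already gives $\int_\Omega Z_f\le\int_{\Omega^\#}v$. The remaining caveats you flag (the set where $|\nabla Z_f|=0$, the nonlocal nature of the comparison for $V-U$) are the genuine technical points and are handled in the cited papers exactly as you indicate.
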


\subsection{Steiner symmetrization}\label{Steiner}

In this subsection, let us assume that $n\geq2$ and let us denote~$\R^n=\R^{n_1}\times\R^{n_2}$, where~$n_1\in\N^*$,~$n_2\in\N^*$, such that $n_1+n_2=n$. We refer to the variables in $\R^{n_1}$
as~$x=(x_1,...,x_{n_1})$, and to the variables in $\R^{n_2}$ as $y=(y_1,...,y_{n_2})$.

\begin{myDefn}[Steiner symmetrization of sets]
Let $\mathrm{C}\subset\R^{n}$ be a measurable set such that~$|\mathrm{C}|<\infty$. The $(n_1,\cdot)$-Steiner symmetrization of $\mathrm{C}$ is defined by
$$
S_{(n_1,\cdot)}(\mathrm{C}):=\left\{ (x,y)\in\R^{n_1}\times\R^{n_2} \mid x\in \mathrm{C}(y)^{\#},~y\in\R^{n_2}  \right\},
$$
where, for all~$y\in\R^{n_2}$, $\mathrm{C}(y)=\{x\in\R^{n_1}\mid (x,y)\in \mathrm{C}\}$ is the~$y$-slice of $\mathrm{C}$, and $\mathrm{C}(y)^{\#}$ is the symmetric rearrangement of $\mathrm{C}(y)$. One defines the $(\cdot,n_2)$-Steiner symmetrization of $\mathrm{C}$ as~$S_{(\cdot,n_2)}(\mathrm{C}):=\Phi^{-1}\left(S_{(n_2,\cdot)}(\Phi(\mathrm{C}))\right)$
where $\Phi:(x,y)\in \R^{n_1}\times\R^{n_2}\mapsto (y,x)\in\R^{n_2}\times\R^{n_1}$.
\end{myDefn}

The next example is involved in item~\ref{seconditem} and item~\ref{thirditem} of Theorem~\ref{maintheo}.

\begin{myExa}\label{steinercart}
Let $\mathrm{C}=\mathrm{C_{n_1}}\times\mathrm{C}_{n_2}\subset\R^n$ such that $\mathrm{C}_{n_1}\subset\R^{n_1}$ (resp. $\mathrm{C}_{n_2} \subset\R^{n_2})$ is a measurable set of $\R^{n_1}$ (resp. of~$\R^{n_2}$) with a finite measure.
Then the~$(n_1,\cdot)$-Steiner symmetrization of $\mathrm{C}$ is given by
$S_{(n_1,\cdot)}(\mathrm{C})=\mathrm{C}_{n_1} ^{\#}\times \mathrm{C}_{n_2}$, where $\mathrm{C}_{n_1} ^{\#}$ is the symmetric rearrangement of $\mathrm{C}_{n_1}$.
\end{myExa}

From the Steiner symmetrization definition, one deduces the following proposition.
 
\begin{myProp}\label{propstei}
Let $\mathrm{C}\subset\R^{n}$ be a measurable set such that~$|\mathrm{C}|<\infty$. Then:
\begin{enumerate}[label=\arabic*)]
    \item $|S_{(n_1,\cdot)}(\mathrm{C})|=|S_{(\cdot,n_2)}(\mathrm{C})|=|\mathrm{C}|$;
    \item $S_{(n_1,\cdot)}(\mathrm{C})$ (resp. $S_{(\cdot,n_2)}(\mathrm{C})$) is symmetric with respect to the hyperplane $x_i=0$ for all~$i\in\{1,...,n_1\}$ (resp. with respect to the hyperplane $y_j=0$ for all $j\in\{1,...,n_2\}$);
    \item $S_{(n_1,\cdot)}(\mathrm{C})$ (resp. $S_{(\cdot,n_2)}(\mathrm{C})$) is convex in the $x_i$-direction for all $i\in\{1,...,n_1\}$ (resp. in the $y_j$-direction for all $j\in\{1,...,n_2\}$).
\end{enumerate}
\end{myProp}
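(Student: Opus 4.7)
\textbf{Proof plan for Proposition \ref{propstei}.} The strategy is to reduce each of the three items directly to an elementary property of the symmetric rearrangement of subsets of $\R^{n_1}$, slicing over $y \in \R^{n_2}$. The corresponding statements for $S_{(\cdot,n_2)}(\mathrm{C})$ will then follow mechanically by applying the coordinate swap $\Phi:(x,y)\mapsto(y,x)$, which is a measure-preserving isometry.

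For item 1), I would invoke Tonelli's theorem. By definition, the $y$-slice of $S_{(n_1,\cdot)}(\mathrm{C})$ is $\mathrm{C}(y)^{\#}$, which is the open ball in $\R^{n_1}$ of measure $|\mathrm{C}(y)|$. Hence
$$
|S_{(n_1,\cdot)}(\mathrm{C})| \;=\; \int_{\R^{n_2}} |\mathrm{C}(y)^{\#}|\,\mathrm{d}y \;=\; \int_{\R^{n_2}} |\mathrm{C}(y)|\,\mathrm{d}y \;=\; |\mathrm{C}|,
$$
provided one first notes that $y \mapsto |\mathrm{C}(y)|$ is measurable (standard consequence of Tonelli applied to $\chi_\mathrm{C}$) and that $S_{(n_1,\cdot)}(\mathrm{C})$ is itself measurable, which amounts to checking that $(x,y)\mapsto \chi_{\mathrm{C}(y)^{\#}}(x)$ is measurable via the equivalent description $|x|^{n_1}\omega_{n_1} < |\mathrm{C}(y)|$.

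For item 2), I would fix $y\in\R^{n_2}$ and observe that $\mathrm{C}(y)^{\#}$ is a ball centered at the origin of $\R^{n_1}$, hence invariant under each reflection $x_i \mapsto -x_i$ for $i\in\{1,\ldots,n_1\}$. Since this symmetry holds slice by slice in $y$, the full set $S_{(n_1,\cdot)}(\mathrm{C})$ inherits the symmetry with respect to every hyperplane $\{x_i=0\}$.

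For item 3), I would again fix $y\in\R^{n_2}$ and also fix the coordinates $x_j$ for $j\neq i$. The intersection of the Euclidean ball $\mathrm{C}(y)^{\#}=\mathrm{B}_{n_1}(0,r)$ (for the appropriate $r=r(y)$) with the affine line obtained by letting only $x_i$ vary is either empty, a single point or an open interval symmetric around $0$. This is exactly the statement of convexity in the $x_i$-direction for $S_{(n_1,\cdot)}(\mathrm{C})$. The transfer to $S_{(\cdot,n_2)}(\mathrm{C})$ via $\Phi$ concludes the proof of all three items. No serious obstacle is expected: everything follows from the spherical symmetry of the rearrangement of each slice, and the only technical point worth spelling out is the measurability issue in item 1).
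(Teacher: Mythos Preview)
Your proposal is correct and is precisely the direct verification from the definition that the paper has in mind: the paper does not spell out a proof of this proposition but simply states that it is deduced from the Steiner symmetrization definition, and your slice-by-slice argument (Tonelli for item~1, radial symmetry of $\mathrm{C}(y)^{\#}$ for item~2, convexity of balls for item~3, then transfer via $\Phi$) is exactly the intended elementary route.
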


\begin{myProp}\label{properties}
Let $\mathrm{C}\subset\R^{n}$ be a measurable set such that $|\mathrm{C}|<\infty$. Then the $(\cdot,n_2)$-Steiner symmetrization of $S_{(n_1,\cdot)}(\mathrm{C})$, i.e. $S_{(\cdot,n_2)}(S_{(n_1,\cdot)}(\mathrm{C}))$, is: 
\begin{enumerate}[label=\Alph*)]
    \item symmetric in the hyperplane $x_i=0$ for all $i\in\{1,...,n_1\}$ and in the hyperplane~$y_j=0$ for all~$j\in\{1,...,n_2\}$;
    \item convex in the $x_i$-direction for all $i\in\{1,...,n_1\}$ and convex in the $y_j$-direction for all~$j\in\{1,...,n_2\}$;
    \item star-shaped at $0\in S_{(\cdot,n_2)}(S_{(n_1,\cdot)}(\mathrm{C}))$.
\end{enumerate}
\end{myProp}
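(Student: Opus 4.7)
The plan is to derive an explicit structural description of $S_{(\cdot,n_2)}(S_{(n_1,\cdot)}(\mathrm{C}))$ as the hypograph of a non-increasing radial function, from which all three items fall out immediately. By definition of the Schwarz symmetrization, each $y$-slice of $S_{(n_1,\cdot)}(\mathrm{C})$ is an $(n_1)$-dimensional ball $\mathrm{B}_{n_1}(0,r(y))$ with $r(y)=(|\mathrm{C}(y)|/\omega_{n_1})^{1/n_1}$. Therefore $(x,y)\in S_{(n_1,\cdot)}(\mathrm{C})$ iff $|x|\leq r(y)$, and so the $x$-slice $\{y\in\R^{n_2}\mid (x,y)\in S_{(n_1,\cdot)}(\mathrm{C})\}=\{y\mid r(y)\geq |x|\}$ depends only on $|x|$; call $m(|x|)$ its $n_2$-dimensional measure. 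Since enlarging $|x|$ tightens the condition $r(y)\geq |x|$, the function $m$ is non-increasing. Applying $S_{(\cdot,n_2)}$ replaces the $x$-slice by $\mathrm{B}_{n_2}(0,s(|x|))$ where $s(t):=(m(t)/\omega_{n_2})^{1/n_2}$ is still non-increasing. Hence
$$
S_{(\cdot,n_2)}\bigl(S_{(n_1,\cdot)}(\mathrm{C})\bigr)=\bigl\{(x,y)\in\R^{n_1}\times\R^{n_2}\,\bigm|\,|y|\leq s(|x|)\bigr\}.
$$

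From this formula, the three items follow quickly. For item A), the defining inequality depends only on the norms $|x|$ and $|y|$, hence is invariant under every reflection $x_i\mapsto -x_i$ and $y_j\mapsto -y_j$. For item B), convexity in the $y_j$-direction (fixing $x$ and the remaining $y_k$) reduces to $y_j^2\leq s(|x|)^2-\sum_{k\neq j}y_k^2$, clearly an interval; convexity in the $x_i$-direction (fixing $y$ and $x_k$ for $k\neq i$, setting $c:=\sum_{k\neq i}x_k^2$) reduces, using the monotonicity of $s$, to $\sqrt{x_i^2+c}\leq u_0$ where $u_0:=\sup\{u\geq 0\mid s(u)\geq |y|\}$, i.e.\ to $x_i^2\leq u_0^2-c$, again an interval. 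For item C), given $(x,y)$ with $|y|\leq s(|x|)$ and $\lambda\in[0,1]$, the inequality $|\lambda y|=\lambda|y|\leq\lambda s(|x|)\leq s(|x|)\leq s(\lambda|x|)$ (using both $\lambda\leq 1$ and the monotonicity of $s$, since $\lambda|x|\leq|x|$) shows that $(\lambda x,\lambda y)$ also lies in the set; the presence of the origin is immediate from $0\leq s(0)$.

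The main obstacle is the convexity in the $x_i$-direction in item B), which is \emph{not} a direct consequence of Proposition~\ref{propstei}: applied to $S_{(\cdot,n_2)}$ it only yields convexity in the $y_j$-directions, and $x_i$-convexity of $S_{(n_1,\cdot)}(\mathrm{C})$ alone does not survive an arbitrary $(\cdot,n_2)$-symmetrization. The point is that one must exploit the stronger structural fact that the $x$-slices of $S_{(n_1,\cdot)}(\mathrm{C})$ depend on $x$ only through $|x|$ in a monotone way, which in turn relies crucially on the ball structure of the $y$-slices produced by the first (Schwarz-type) symmetrization. Once this rotational/monotone dependence on $|x|$ is established, the $x_i$-convexity and the star-shapedness at the origin both reduce to elementary monotonicity arguments.
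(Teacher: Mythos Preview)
Your proof is correct and takes a genuinely different route from the paper. You derive an explicit structural formula $S_{(\cdot,n_2)}(S_{(n_1,\cdot)}(\mathrm{C}))=\{(x,y):|y|<s(|x|)\}$ with $s$ non-increasing, and read off A), B), C) from it. The paper instead argues each item by direct slice-inclusion, invoking Proposition~\ref{propstei} wherever possible. For the delicate point---$x_i$-convexity in B)---the paper does \emph{not} use the full rotational (ball) structure of the $y$-slices that you rely on: it only uses that $S_{(n_1,\cdot)}(\mathrm{C})$ is symmetric in $x_i=0$ and convex in the $x_i$-direction (items 2) and 3) of Proposition~\ref{propstei}), which together force the $x$-slices $S_{(n_1,\cdot)}(\mathrm{C})(x)$ to be nested monotonically in $|x_i|$; that nesting survives the symmetric rearrangement in $y$, and $x_i$-convexity follows. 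So your closing remark that one ``must exploit the stronger structural fact'' of dependence on $|x|$ is slightly overstated: coordinate-wise symmetry plus convexity already suffices. Your approach buys a clean explicit picture of the doubly symmetrized set (useful in its own right); the paper's buys modularity, in that the same argument would go through for any first symmetrization producing only coordinate symmetry and convexity rather than full radial symmetry. A minor imprecision: your reduction of the $x_i$-condition to $\sqrt{x_i^2+c}\leq u_0$ tacitly assumes $\{u:s(u)\geq|y|\}=[0,u_0]$, but for non-increasing $s$ this set could equally be $[0,u_0)$; either way it is an initial interval, so the preimage in $x_i$ is still an interval and the argument stands.
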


\begin{proof}
$A)$ can be easily proved using item~$2)$ of Proposition~\ref{propstei}. \\
$B)$ By item~$3)$ of Proposition~\ref{propstei}, the set $S_{(\cdot,n_2)}\left(S_{(n_1,\cdot)}(\mathrm{C})\right)$ is convex in the $y_j$-direction for all~$j\in\{1,...,n_2\}$.  Let $i\in\{1,...,n_1\}$ be fixed and consider $(x',y), (x'',y)\in S_{(\cdot,n_2)}\left(S_{(n_1,\cdot)}(\mathrm{C})\right)$ where $x'=\left(x_{k} '\right)_{k\in\{1,...,n_1\}}$ and $x''=\left(x_{k}''\right)_{k\in\{1,...,n_1\}}$,  such that $x_{k}'=x_{k} ''$, for all $k\in\{1,...,n_1\}\backslash\{ i \}$. Then $y\in S_{(n_1,\cdot)}(\mathrm{C})(x')^{\#}$ and $y\in S_{(n_1,\cdot)}(\mathrm{C})(x'')^{\#}$. Without loss of generality, we can assume that~$|x_i''|\leq |x_i'|$, then since~$S_{(n_1,\cdot)}(\mathrm{C})$ is symmetric with respect to the hyperplane $x_i=0$ and convex in the $x_i$-direction, then 
$$
S_{(n_1,\cdot)}(\mathrm{C})(x')\subset S_{(n_1,\cdot)}(\mathrm{C})(x_1',...,x_{i-1}',z_i,x_{i+1}',..., x_n'),
$$
 for all $z_i \in [-x_i ',x_i ']$. In particular, since $|x_i''|\leq |x_i'|$, then~$x_i''\in[-x_i ', x_i']$ and $[x_i'',x_i']\subset[-x_i ',x_i']$. It follows that 
 $$
 S_{(n_1,\cdot)}(\mathrm{C})(x') \subset S_{(n_1,\cdot)}(\mathrm{C})(x_1',...,x_{i-1}',z_i,x_{i+1}',...,x_n'),$$ for all $z_i \in [x_1 '',x_1 ']
 $, thus 
 $$
 S_{(n_1,\cdot)}(\mathrm{C})(x') ^{\#} \subset S_{(n_1,\cdot)}(\mathrm{C})(x_1',...,x_{i-1}',tx_i'+(1-t)x_i'', x_{i+1}',..., x_n') ^{\#},
 $$
 and then $y\in S_{(n_1,\cdot)}(\mathrm{C})(x_1',...,x_{i-1}',tx_i'+(1-t)x_i'',x_{i+1}',..., x_n') ^{\#}$ for all $t\in[0,1]$. One gets that
 $$
 (x_1',...,x_{i-1}',tx_i'+(1-t)x_i'',x_{i+1}',..., x_n',y)=t(x',y)+(1-t)(x'',y)\in S_{(\cdot,n_2)}\left(S_{(n_1,\cdot)}(\mathrm{C})\right),
 $$
 therefore $S_{(\cdot,n_2)}\left(S_{(n_1,\cdot)}(\mathrm{C})\right)$ is convex in the $x_i$-direction.\\
$C)$ Let $(x',y')\in S_{(\cdot,n_2)}(S_{(n_1,\cdot)}(\mathrm{C}))$. Since $S_{(\cdot,n_2)}(S_{(n_1,\cdot)}(\mathrm{C}))$ is symmetric with respect to the hyperplanes $x_i=0$ and convex in the $x_i$-direction, for all $i\in\{1,...,n_1\}$, then one gets $(0_{n_1},y')\in S_{(\cdot,n_2)}(S_{(n_1,\cdot)}(\mathrm{C}))$, where $0_{n_1}$ is the null vector of $\R^{n_1}$. In the same way, using the symmetry with respect to the hyperplane $y_j=0$ and the convexity of $S_{(\cdot,n_2)}(S_{(n_1,\cdot)}(\mathrm{C}))$ in the~$y_j$-direction for all $j\in\{1,...,n_2\}$, it follows that $0=(0_{n_1},0_{n_2})\in S_{(\cdot,n_2)}(S_{(n_1,\cdot)}(\mathrm{C}))$. Now, let us consider~$z\in[0,(x',y')]$. Then, there exists~$t\in[0,1]$ such that $z=t(x',y')$. Using the symmetry with respect to the hyperplanes $x_i=0$ and the convexity in the $x_i$-direction,  for all $i\in\{1,...,n_1\}$, one gets that~$(tx',y')\in S_{(\cdot,n_2)}(S_{(n_1,\cdot)}(\mathrm{C}))$. Then, in the same way one gets $(tx',ty')\in S_{(\cdot,n_2)}(S_{(n_1,\cdot)}(\mathrm{C}))$ and it follows that~$z\in S_{(\cdot,n_2)}(S_{(n_1,\cdot)}(\mathrm{C}))$, thus $[0,(x',y')]\subset S_{(\cdot,n_2)}(S_{(n_1,\cdot)}(\mathrm{C}))$ which concludes the proof.
\end{proof}

\begin{myDefn}[Steiner symmetrization of functions]
Let $\mathrm{C}$ be a measurable subset of $\R^n$ such that~$|\mathrm{C}|<\infty$, and let~$f : \mathrm{C}\rightarrow \mathbb{R}$ be a nonnegative measurable function. The $(n_1,\cdot)$-Steiner symmetrization of $f$ is the function $S_{(n_1,\cdot)}(f)$ defined by
\begin{equation*}
S_{(n_1,\cdot)}(f)(x,y):=\int_{0}^{+\infty} \chi_{S_{(n_1,\cdot)}(\left\{f>t\right\})}(x,y)\mathrm{d}t,
\end{equation*}
for all $(x,y)\in S_{(n_1,\cdot)}(\mathrm{C})$.
\end{myDefn}

\begin{myExa}\label{indicSteiner}
Let $\Omega$ be a nonempty bounded open subset $\R^n$ and let $\mathrm{C}$ be a measurable subset of $\Omega$. Consider the map $f : (x,y)\in\Omega \mapsto \alpha\chi_{\mathrm{C}}(x,y)+\beta\chi_{\Omega\backslash\mathrm{C}} (x,y) \in\R^+$, where~$\alpha,\beta$ are constant such that $\alpha\geq\beta\geq0$, and $\chi_{\mathrm{C}}$ (resp. $\chi_{\Omega\backslash\mathrm{C}}$) is the characteristic function of $\mathrm{C}$ (resp. of $\Omega\backslash\mathrm{C}$). Then the~$(n_1,\cdot)$-Steiner symmetrization of $f$ is $S_{(n_1,\cdot)}(f) :(x,y)\in S_{(n_1,\cdot)}\left(\Omega\right) \mapsto \alpha\chi_{S_{(n_1,\cdot)}\left(\mathrm{C}\right)} (x,y) +\beta\chi_{S_{(n_1,\cdot)}\left(\Omega\right)\backslash S_{(n_1,\cdot)}(\mathrm{C})} (x,y) \in\R$.
\end{myExa}

The next Steiner symmetrization comparison theorem  for elliptic equations (see~\cite{CHIA}) is the main theorem needed to prove item~\ref{seconditem} and item~\ref{thirditem} of Theorem~\ref{maintheo}.

\begin{myTheorem}\label{theoSt}
Let $\Omega\subset\R^{n}$ and assume that $\Omega=\Omega_{n_1} \times\Omega_{n_2}$, where $\Omega_{n_1}\subset\R^{n_1}$ (resp.~$\Omega_{n_2}\subset\R^{n_2}$) is a nonempty bounded open subset of $\R^{n_1}$ (resp. of $\R^{n_2}$). Let $f\in\LL^{p}(\Omega)$ be a nonnegative function with $p>n/2$,~$D>0$, and consider~$Z_f\in\HH^1 _0 (\Omega)$ the unique weak solution of the Dirichlet problem
\begin{equation*}
\arraycolsep=2pt
\left\{
\begin{array}{rcll}
 Z_f -D\Delta Z_f & = & f   & \text{ in } \Omega , \\
Z_f & = & 0  & \text{ on } \partial{\Omega} ,
\end{array}
\right.
\end{equation*}
and $v\in\HH^1 _0 (S_{(n_1,\cdot)}(\Omega))$ the unique weak solution of the Dirichlet problem
\begin{equation*}
\arraycolsep=2pt
\left\{
\begin{array}{rcll}
  v -D\Delta v & = & S_{(n_1,\cdot)}(f)  & \text{ in } S_{(n_1,\cdot)}(\Omega) , \\
v & = & 0  & \text{ on } \partial{S_{(n_1,\cdot)}(\Omega)},
\end{array}
\right.
\end{equation*}
where $S_{(n_1,\cdot)}(\Omega)=\Omega_{n_1} ^{\#}\times\Omega_{n_2}$. Then, one has
$$
\int_{\Omega_{n_1}} Z_f(x,y)\mathrm{d}x\leq\int_{\Omega_{n_1} ^{\#}} v(x,y)\mathrm{d}x,
$$
for almost all $y\in\Omega_{n_2}$.
\end{myTheorem}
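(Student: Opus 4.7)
My plan is to prove Theorem~\ref{theoSt} by adapting the classical Talenti argument for Schwarz symmetrization to the partial (Steiner) symmetrization setting, following the strategy of \cite{CHIA}. The key idea is to work slice-wise: for almost every $y\in\Omega_{n_2}$, I would compare the $x$-profile of $Z_f(\cdot,y)$ with its Schwarz rearrangement in $x$, and then with $v(\cdot,y)$, which by uniqueness must be radial in $x$.

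First, Fubini's theorem combined with the boundary condition $Z_f=0$ on $\partial\Omega=(\partial\Omega_{n_1}\times\overline{\Omega_{n_2}})\cup(\overline{\Omega_{n_1}}\times\partial\Omega_{n_2})$ yields, for a.e.\ $y\in\Omega_{n_2}$, that $Z_f(\cdot,y)\in\HH_0^1(\Omega_{n_1})$ and satisfies in $\mathcal{D}'(\Omega_{n_1})$
\[
Z_f(\cdot,y)-D\Delta_x Z_f(\cdot,y)=f(\cdot,y)+D\Delta_y Z_f(\cdot,y).
\]
Denoting $\mu(t,y):=|\{x\in\Omega_{n_1}\mid Z_f(x,y)>t\}|$ and the decreasing rearrangement of $Z_f(\cdot,y)$ in $x$ by $Z_f^*(\cdot,y)$, I would use as test function the Lipschitz truncation $\min(1,h^{-1}(Z_f-t)_+)$, integrate by parts in $x$, send $h\to 0^+$, and combine Cauchy--Schwarz with the classical isoperimetric inequality in $\R^{n_1}$ (which gives $n_1\omega_{n_1}^{1/n_1}\mu(t,y)^{1-1/n_1}\leq\mathrm{Per}(\{Z_f(\cdot,y)>t\})$). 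This yields a Talenti-type integro-differential inequality on $Z_f^*(\cdot,y)$ whose right-hand side involves the rearrangement in $x$ of $f+D\Delta_y Z_f$.

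Next, I would observe that $S_{(n_1,\cdot)}(\Omega)=\mathrm{B}_{n_1}(0,r_1)\times\Omega_{n_2}$ is rotationally invariant in the $x$-variables, and that the forcing $S_{(n_1,\cdot)}(f)(\cdot,y)$ is by construction radially non-increasing in $x$ for each $y$. By uniqueness of the weak solution, $v(\cdot,y)$ inherits this radial symmetry, so its $x$-decreasing rearrangement coincides with $v(\cdot,y)$ itself (up to the polar change of variable). One then checks that $v$ satisfies the analogous integro-differential identity, but with \emph{equality} instead of an inequality. A comparison principle for the resulting ODE in the one-dimensional variable $s$ yields $Z_f^*(s,y)\leq v^*(s,y)$ for every $s\in\,]0,|\Omega_{n_1}|[\,$, and integrating in $s$, combined with equimeasurability of rearrangements, gives the claim.

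The main technical obstacle is the cross-derivative term $D\Delta_y Z_f(\cdot,y)$ appearing in the slice equation: unlike in the pure Schwarz case it has no sign and is not monotonically controlled by slice rearrangement. Handling it requires a Hardy--Littlewood inequality coupled with a careful regularization, so that the rearrangement of the full right-hand side can be compared to that of $f$ alone after integrating over $x$. The integrability hypothesis $f\in\LL^p(\Omega)$ with $p>n/2$ is precisely what is needed, via elliptic regularity, to ensure $Z_f\in\LL^{\infty}(\Omega)$ and to justify all the limiting arguments at the level of the co-area formula; this is the technical heart of \cite{CHIA}.
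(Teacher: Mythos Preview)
The paper does not supply its own proof of Theorem~\ref{theoSt}: this result is stated in the appendix purely as a quotation from the literature, with an explicit reference to \cite{CHIA} and no argument given. There is therefore no ``paper's proof'' to compare your attempt against; the authors simply invoke the theorem as a black box when proving items~\ref{seconditem} and~\ref{thirditem} of Theorem~\ref{maintheo}.

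On its own merits, your sketch follows the correct strategy of \cite{CHIA}: slice the equation at fixed $y$, run a Talenti-type level-set argument in the $x$-variables using the isoperimetric inequality in $\R^{n_1}$, and compare with the symmetrized problem whose solution is radial in $x$ by uniqueness. Two technical points are worth flagging. First, because of the zeroth-order term on the left, the differential inequality one obtains typically yields the \emph{concentration} comparison $\int_0^s Z_f^{*}(\sigma,y)\,\mathrm{d}\sigma\leq\int_0^s v^{*}(\sigma,y)\,\mathrm{d}\sigma$ via a Gronwall-type argument, rather than the pointwise bound $Z_f^{*}(s,y)\leq v^{*}(s,y)$ you state; the integrated form is of course what the theorem asserts. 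Second, and more seriously, the treatment of the cross term $D\Delta_y Z_f$ is where the real work lies: one cannot simply rearrange it in $x$ and apply Hardy--Littlewood, because it changes sign. The method in \cite{CHIA} instead controls the $y$-derivatives of the \emph{rearranged} function $\int_0^s Z_f^{*}(\sigma,y)\,\mathrm{d}\sigma$ and shows these second derivatives enter the comparison inequality with the right sign; this is genuinely delicate and is the step your sketch defers to the reference. Your outline is accurate as a roadmap, but as written it is a summary of \cite{CHIA} rather than an independent proof.
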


\section{Bang-bang controls and extreme points}\label{appendixB}

In this section, we remind the proof that extreme points of $\mathcal{V}_{ad}$ are the set of bang-bang controls.

\begin{myProp}\label{ProofExtrBang}
Let $\mathcal{V}_{ad}$ be the set defined in~\eqref{setconstr}.
Then, 
$$
\mathrm{Extr}(\mathcal{V}_{ad})=\left\{ u\in\LL^{\infty}(\Omega) \mid u\in\left\{0,1\right\} \text{ \textit{a.e.} in } \Omega, \text{ and } \int_{\Omega}u(x)\mathrm{d}x= L\right\}.
$$
\end{myProp}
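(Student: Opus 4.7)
The plan is to prove the two inclusions separately, both by fairly direct arguments tailored to the structure of $\mathcal{V}_{ad}$ as an intersection of $L^\infty$ box constraints with a single affine constraint $\int_\Omega u = L$.

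For the inclusion showing that any bang-bang control is extreme, I would take $u \in \mathcal{V}_{ad}$ with $u \in \{0,1\}$ a.e., and assume a decomposition $u = (v_1+v_2)/2$ with $v_1,v_2 \in \mathcal{V}_{ad}$. On the set $\{u=0\}$, the identity $v_1+v_2=0$ combined with $v_i \geq 0$ forces $v_1=v_2=0$ a.e.; symmetrically, on $\{u=1\}$ the upper bound $v_i \leq 1$ together with $v_1+v_2=2$ forces $v_1=v_2=1$ a.e. Hence $u=v_1=v_2$, which gives $u\in\mathrm{Extr}(\mathcal{V}_{ad})$.

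For the reverse inclusion I would argue by contraposition. Suppose $u \in \mathcal{V}_{ad}$ is not bang-bang. Then for some $\varepsilon>0$ the set $A := \{x\in\Omega : \varepsilon \leq u(x) \leq 1-\varepsilon\}$ has positive Lebesgue measure. Using the nonatomicity of Lebesgue measure, split $A$ into two disjoint measurable pieces $A^1$ and $A^2$ of equal measure, and set $h := \varepsilon(\chi_{A^1} - \chi_{A^2}) \in \LL^\infty(\Omega)$. Then $\int_\Omega h\,\mathrm{d}x = 0$, and on $A$ the bounds defining $A$ ensure $u \pm h \in [0,1]$ a.e., while $h=0$ off $A$. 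Therefore $v_1:=u+h$ and $v_2:=u-h$ both lie in $\mathcal{V}_{ad}$, with $u=(v_1+v_2)/2$ and $v_1 \neq v_2$ (since $h$ does not vanish on $A^1\cup A^2$), contradicting $u \in \mathrm{Extr}(\mathcal{V}_{ad})$.

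The proof is standard and has no deep obstacle; the only mildly technical point is producing the equal-measure splitting of $A$, which follows from nonatomicity (for instance, by the intermediate value theorem applied to $t \mapsto |A \cap \{x_1 \leq t\}|$). Conceptually, the whole argument encodes that the only way to avoid creating a sign-changing zero-mean perturbation within the feasible set is to force $u$ onto the boundary values $\{0,1\}$ almost everywhere.
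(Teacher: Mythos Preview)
Your proof is correct and follows essentially the same strategy as the paper: the first inclusion is identical, and for the converse both you and the paper construct a nontrivial zero-mean $L^\infty$ perturbation supported where $0<u<1$, invoking nonatomicity of Lebesgue measure to split a positive-measure set. Your construction is in fact a bit cleaner than the paper's, since by first passing to the sublevel set $\{\varepsilon\le u\le 1-\varepsilon\}$ you can use a constant $\pm\varepsilon$ perturbation on equal-measure pieces, whereas the paper works on all of $\{0<u<1\}$ and must introduce an auxiliary weight to balance the integral.
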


\begin{proof}
$\supseteq$ Let $u$ be a bang-bang control, then there exists $\mathrm{C}\subset\Omega$, $|\mathrm{C}|=L$, such that $u=\chi_\mathrm{C}$. Assume that $u\notin\mathrm{Extr}(\mathcal{V}_{ad})$, then there exists $v_1,v_2\in\mathcal{V}_{ad}$ such that $u=(v_1+v_2)/2$, where $v_1\neq v_2$. If $x\in \mathrm{C}$, then $v_1(x)+v_2(x)=2$, thus $v_1(x)=v_2(x)=1$, and if $x\notin \mathrm{C}$, then $v_1(x)+v_2(x)=0$, thus $v_1(x)=v_2(x)=0$. It follows that $v_1=v_2=u$ which is a contradiction. \\
$\subseteq$ Let $u\in\mathrm{Extr}(\mathcal{V}_{ad})$ and assume that $u$ is not a bang-bang control. Then $\tilde{\Omega}:=\{x\in\Omega, 0<u(x)<1\}$ satisfies $|\tilde{\Omega}|>0$. Therefore, there exists~$h\in\LL^{\infty}(\tilde{\Omega})$, such that $0<h<1$ \textit{a.e.} on~$\tilde{\Omega}$ and $0<u(x)-h(x)<u(x)<u(x)+h(x)<1$ for almost all $x\in\tilde{\Omega}$. Moreover, the map~$f : r\in\R^+ \to |\mathrm{B}_n (0,r)\cap\tilde{\Omega}|\in \R^+$ is continuous with $f(0)=0$ and $f(r)=|\tilde{\Omega}|$ for all $r\geq R$, if~$R>0$ is large enough. Thus, there exists $\mathrm{U}\subset\tilde{\Omega}$ such that~$|\mathrm{U}|=|\tilde{\Omega}|/2$. Let us consider
$$
\fonction{g }{\tilde{\Omega}}{\R}{x}{ g(x) := \left\{
\begin{array}{lcll}
\epsilon	&   & \text{ if } x\in \mathrm{U}, \\
\displaystyle-\epsilon\frac{\int_{\mathrm{U}} h}{\int_{\tilde{\Omega}\backslash \mathrm{U}} h}	&   & \text{ if } x\in \tilde{\Omega}\backslash \mathrm{U},
\end{array}
\right.
}
$$
where $\epsilon\in]0,1[\cap]0,\frac{{\int_{\tilde{\Omega}\backslash \mathrm{U}} h}}{\int_{\mathrm{U}} h}[$, and  
$$
y:=\left\{\begin{array}{lcll}
u-gh	&   & \text{ if } x\in \tilde{\Omega} \\
u	&   & \text{ if } x\in \Omega\backslash \tilde{\Omega},
\end{array}
\right. \text{ and }
z:=\left\{\begin{array}{lcll}
u+gh	&   & \text{ if } x\in \tilde{\Omega}, \\
u	&   & \text{ if } x\in \Omega\backslash \tilde{\Omega}.
\end{array}
\right.
$$
Then $0\leq y\leq1$ and $0\leq z\leq1$, \textit{a.e.} on $\Omega$, and,
$$
\int_{\Omega}y=\int_{\Omega\backslash \tilde{\Omega}}u+\int_{\tilde{\Omega}}u -\epsilon\int_{\mathrm{U}}h+\epsilon\frac{\int_{\mathrm{U}} h}{\int_{\tilde{\Omega}\backslash \mathrm{U}} h}\int_{\tilde{\Omega}\backslash \mathrm{U}}h=\int_{\Omega} u=L,
$$
and the same calculation applies to $z$. Thus~$y,z\in\mathcal{V}_{ad}$, and $u=(y+z)/2$, with $y\neq z$, thus $u$ is not an extreme point of $\mathcal{V}_{ad}$ which is a contradiction.
\end{proof}

\bibliographystyle{abbrv}
\bibliography{biblio}

\end{document}